\documentclass[a4paper,12pt]{article}
\usepackage[utf8]{inputenc}
\usepackage{graphicx} 
\usepackage{tikz}
    \usetikzlibrary{positioning}
    \usetikzlibrary{cd}
    \tikzset{main node/.style={circle,fill=blue!20,draw,minimum size=1cm,inner sep=0pt},}

\usepackage[all]{xy}
\usepackage{subcaption}
\usepackage{amsmath}
\usepackage{amsthm}
\usepackage{thmtools}
\usepackage{amssymb}
\usepackage{amsfonts}
\usepackage{hyperref}
\usepackage{cleveref}
\usepackage{comment}
\usepackage{fullpage}

\newtheorem{defi}{{\it \bf{Definition.}}}[section]
\newtheorem{prop}{{\it \bf{Proposition.}}}[section]

\newtheorem{cor}{{\it \bf{Corollary.}}}[section]
\newtheorem{lem}{{\it \bf{Lemma.}}}[section]
\newtheorem{teo}{{\it \bf{Theorem.}}}[section]

\newtheorem{rem}{{\it \bf{Remark.}}}[section]

\newcommand{\rarr}{\rightarrow}

\newcommand{\bbZ}{\mathbb{Z}}
\newcommand{\bbQ}{\mathbb{Q}}
\newcommand{\bbR}{\mathbb{R}}

\newcommand{\mathset}[1]{{\left\{#1\right\}}}
\newcommand{\absolute}[1]{\left\lvert#1\right\rvert}

\newcommand{\ToDo}[1]{{{\bigskip\noindent\bf To Do: }#1\medskip}}
\newcommand{\ToCite}[1]{{{\bigskip\noindent\bf To Cite: }#1\medskip}}

\newif\ifworkinprogress
    \workinprogresstrue

\title{$p$-Adic Diffusion and Random Walks on $[0,1]$}

\author{Patrick Erik Bradley\\
 Paulina Halwas
 \\Ell Ari Nitsche
 \\\'Angel Mor\'an Ledezma
\\[2mm]
Geodetic Institute Karlsruhe
\\
and
\\
Institute of Photogrammetry and Remote Sensing
\\
Karlsruhe Institute of Technology
\\
Englerstr.\ 7
\\
76131 Karlsruhe, Germany
}
\date{\today}

\begin{document}

\maketitle

\begin{abstract}
Integral operators on the real unit interval are constructed as  transported from  ones on the $p$-adic unit disc via the Monna map. This gives rise to  strong Markov processes on $[0,1]$, whose paths
are right-continuous and have no discontinuities other than jumps.
The spectra of the corresponding diffusion operators, whose kernel functions depend on a $p$-adic distance, are calculated. Further, the solution to the Cauchy problem for their heat equations are approximated via continuous-time random walks on finite sets coming from a hierarchical partition of the unit interval induced by the $p$-adic distance. The transport of  $p$-adic diffusion to the real domain via the Monna map gives rise to a simple visualisation method. Illustrations of concrete examples are undertaken in the end. 
\end{abstract}

\tableofcontents

\setlength{\parindent}{0em}
\setlength{\parskip}{1em}

\section{Introduction}

The emerging field of $p$-adic analysis and its application to stochastic processes
is quite recent, as it is ``just'' a few decades old, and at the same time, it ``feels'' like it is totally disconnected from real analysis   and its own application to stochastic processes.
However, there is a link between these two seemingly detached areas of mathematics: not only do these two fields of pure mathematics have a lot of common ground in operator theory and functional analysis, but also are they joined by a direct connection between the $p$-adic numbers $\mathbb{Q}_p$ and the real numbers $\mathbb{R}$ through a map which has only sporadically  gained interest so far: the \emph{Monna map}, introduced in \cite{Monna1952}. An early application in the statistics of numbers is \cite{Meijer1967}, a more recent one is \cite{Weiss2025}. This surprising  fact is now in a process of being updated: during the process of writing down the results of this research, the publication \cite{Wilson2026} appeared which for the first time acknowledges the implications of the Monna map in establishing an isometric isomorphism between the Hilbert spaces $L^2(\mathbb{Q}_p,\mu)$ and $L^2(\mathbb{R},\lambda)$, where $\mu$ is the Haar measure and $\lambda$ the Lebesgue measure, both normalised to unity on the unit disc, and the unit interval, respectively. In \cite{Wilson2026}, this isometry plays out on the level of real-valued functions on the unit interval in order to obtain a non-Archimedean description of Deep Neural Networks via functions on the $p$-adic unit disc. However, to us it seems (although not explicitly formulated in that way) meet and just to acknowledge both the original work of Monna in \cite{Monna1952}, as well as Kozyrev in \cite{Kozyrev2004}, in envisioning this natural correspondence between the $p$-adic and the real number worlds as being much closer to us than merely on the horizon. In particular, Kozyrev's observation, that his novel $p$-adic wavelets are transported via the Monna map to generalised forms of the Haar wavelet, deserves mentioning, as it forms an important part of the ground upon which this article is built upon. 
\newline

In this article, we introduce a random process on the probability space $([0,1],\lambda)$ where $\lambda$ denotes the usual Lebesgue measure with total mass equaling to one. This process can be understood as a limiting process of a sequence of continuous-time discrete-space processes. All these processes are induced via carrying over a $p$-adic counterpart via the Monna map. This map thus acts as a ``translator'' between the $p$-adic theory and the real-domain theory of stochastic processes. In particular, since the kernel functions used are all radial w.r.t.\ the $p$-adic absolute norm, the Kozyrev wavelet eigenvalue formula can be used to obtain a fast method for computing eigenvalues of Laplacian integral operators of these kind. This is exploited in the end in two example processes which are approximate $p$-adic diffusion processes on the real unit interval.
\newline

In order to see existing research in mathematical physics related to this article, let us mention that
in stochastic thermodynamics the dynamics of mesoscopic physical systems subject to random interactions with a heat reservoir is studied, cf.\ e.g.\ 
\cite{PP2021,FE2025}. In this context, as in  material kinetics \cite{Mauro2021},  such random models are usually described by a probability function $p_I(t)$ that describes the probability of finding the system in a discrete state $I$ at a given time $t$. The evolution of $p_I(t)$ over time is usually described by a continuous time Markov chain. 
This process, viewed here as jumping between meta-basins on an energy landscape \cite{MS2012}, is fully charaterized by its master equation, cf.\ \cite[Ch.\ V]{Kampen2007}. Let $A$ be a finite set of states. The so called \textit{master equation} 
\begin{equation}
\label{eqn:1}
    \frac{d }{dt}p_I(t)=\sum_{J\in A} \{w_{I,J}p_J(t)-w_{J,I}p_I(t)\}
\end{equation}
describes the time evolution of the function $p_I(t)$, the probability of finding the system at state $I\in A$ at time $t$. The numbers $w_{I,J}\geq 0$ denotes the probability transition rate per unit of time from state $J$ to the state $I$. These numbers are also called jump rates. It is clear that a master equation is determined by the matrix of jump rates 
\[
W=[w_{I,J}]_{I,J}\in\mathbb{R}_+^{\absolute{A}\times\absolute{A}}\,.
\]
Moreover, it is useful to represent a master equation via a jump network or jump graph, where the nodes represent the states $I$, the arrows represent the possible jumps, and the weights  correspond to jump rates \cite[Chapter 2]{PP2021}.  Solving the master equation explicitly can be a highly non trivial problem and in practice is often inefficient. Therefore, usual methods involve random trajectories for example using the Monte Carlo method or the Gillespie algorithm \cite{Gillespie1977}.
\newline

It turns out that if the jump rates depend on a $p$-adic distance between the states, then the explicit form of the eigenvalues given in \cite{Kozyrev2004} can be used to effect a linear complexity, both in space and time.
Under certain regularity conditions, the time-complexity for general ultrametric kernel functions can with good reason expected to also be linear, as there is the wavelet eigenvalue formula also in this case, cf.\ \cite{XK2005}.
Descriptions of such processes via $p$-adic analysis can be found in the mathematised sciences \cite{Kozyrev2011},
and in mathematics itself, where in particular the heat equation on $p$-adic integers as an example of diffusion on a compact space is studied, cf.\ \cite{Kochubei2018,PW2025}. Based on Z\'u\~{n}iga's approach to Turing patterns on graphs via networks connecting finitely many disjoint $p$-adic discs, cf.\ \cite{ZunigaNetworks,Zuniga2022}, the recent research of two authors of this article builds upon those ideas in order to study local ultrametric approximations of graph
diffusion, \cite{locUltraGraphs},
time-changing graphs and applications
\cite{nonAutonomous,Ledezma-Energy}, hearing the shape of graphs
 \cite{BL_shapes_p}, diffusion on multi-topologies
\cite{UltrametricTopoIndex}, in order to name some. The dissertation \cite{AngelDiss} develops in depth spectral and stochastic methods  on ultrametric spaces and applications 
in the sciences.
The very recent article \cite{Wilson2026} uses Monna's result on the Haar-to-Lebesgue measure correspondence between the $p$-adic integers to the real unit interval in order to develop a $p$-adic formulation of Deep Neural Networks.
This already shows the potential of the Monna map in transporting information back and forth between the $p$-adic and the real number domains, and this is further exploited here to carry over whole Markov processes, and thus to obtain a simple visualisation method for $p$-adic diffusion processes. 
\newline

Studies of random walks on ultrametric Cantor sets in $\mathbb{R}$ often naturally lead to investigating the relationship between ultrametric trees and ultrametric Brownian motion, cf.\ \cite{Kigami2010,Kigami2013,KT2024}. These are so far formulated in $p$-adic terms alone, whereas here, it is via an integral operator on a domain within the real numbers with kernel function taking its input via the Monna map. In this way, obtain random walks having as state space the whole real unit interval, instead of a Cantor set within. And it is its $p$-regular hierarchical property which allows  natural finite approximations via finite random walks on the boundaries of $p$-regular trees, also in the the Cauchy problems and their solutions. This is somewhat different from  usual Brownian motion on the real unit interval.
\newline

Our main result is stated via the following integral operator
given by a kernel function
$k(x,y)$ defined via the Monna map $\rho$ as
\[
k(x,y)=f\left(\absolute{\rho^{-1}(x)-\rho^{-1}(y)}_p\right)\,,
\]
and the corresponding integral operator given by
\[
D_fu(x)=\int_0^1k(x,y)(u(y)-u(x))\,dy
=\int_0^1f\!\left(\absolute{\rho^{-1}(x)-\rho^{-1}(y)}_p\right)(u(y)-u(x))\,dy
\]
for functions $u\colon [0,1]\to\mathbb{C}$. It naturally relates to the operator

\begin{equation} \label{eq:padicoperator1}
P_fh(\xi)=\int_{\mathbb{Z}_p}f\!\left(\absolute{\xi-\eta}_p\right)(h(\eta)-h(\xi))\,d\mu(\eta)\,,
\end{equation}
which is an infinite-dimensional version of
(\ref{eqn:1}) on the $p$-adic unit disc $\mathbb{Z}_P$, whose kernel function depends on the $p$-adic norm $\absolute{\cdot}_p$, in the  following way:

{\bf Theorem. 3.2.}
{\em
    Let \(X_t\in [0,1]\) be the strong Markov process attached to the infinitesimal generator \(D_f\) with probability transition \(p_t(x,\cdot)\). Let \(P_f\) be the operator defined on \(L^2(\mathbb{Z}_p,\mu)\) defined by equation (\ref{eq:padicoperator1}). Let \(X^{*}_t:=\rho^{-1}(X_t)\) be the pull-back of \(X_t\). 
    Then the following holds
    \begin{enumerate}
        \item The attached semigroups are unitarily equivalent:
        \begin{equation*}
            e^{tD_f}=\rho^{*-1} e^{tP_f}\rho^{*}
        \end{equation*}
        \item The pull-back \(X^{*}_t\in \mathbb{Z}_p\) is a strong Markov process on \(\mathbb{Z}_p\) with probability transition \(p_t^{*}(y,\cdot)\) satisfying
        \[p_t^{*}(x,B)=p_t(\rho(x),\rho(B)),\]for any Borel set \(B\subset \mathbb{Z}_p\).
    \end{enumerate}
}

This result is surrounded in this article by solving the Cauchy problem to the associated heat equation (Theorem 2.1), a spectral decomposition of $L^2([0,1],\lambda)$ for such an operator coming from the $p$-adics, and including an eigenvalue formula (borrowed from Kozyrev), finite approximation of the solution to the Cauchy problem via solutions to approximate Cauchy problems in the supremum norm (Theorem 3.3), and exemplified approximate $p$-adic diffusion on the real unit interval visualised through the Monna map entering the algorithm in Section 4.
\newline

The following Section 2 begins with preliminaries, briefly introducing the $p$-adic numbers, the Monna map and other concepts needed for the remainder of this article. It contains our first result (Theorem 2.1). Section 3 introduces the $p$-adic integral Laplacian operator on the real unit interval via the Monna map and is devoted to prove Theorems 3.1 and 3.2. Section 4 concludes with an algorithm for obtaining visualisations of these $p$-adic processes, exemplified in a Gaussian  and in a power  kernel function, in both cases using  $p$-adic distance. 

\section{Preliminaries}

The set of rational numbers $\bbQ$ is usually equipped with the absolute value
\(
    |\cdot|: \bbQ \rightarrow \bbR^+_0
\), and  the completion of $\bbQ$ with respect to $|\cdot|$
is the field of real numbers $\bbR$.

There are several other norms one could have chosen.
On such choice is given by any prime number $p$ as explained in what follows.
Every non-vanishing fraction $\frac{a}{b}\in \bbQ \setminus \{0\}$ can now be rewritten
with a suitable $\nu\in\bbZ$
and coprime integers $a'$ and $b'$ as
\[
    \frac{a}{b} = p^\nu \cdot \frac{a'}{b'}.
\]
%
This representation is unique for every rational number,
where $\nu$ is called the \emph{$p$-adic valuation} of $\frac{a}{b}$.
In this way we obtain a function
\(
    \nu: \bbQ\setminus\{0\} \rarr \bbZ
\),
used to defined the $p$-adic norm on $\bbQ$ by setting
\begin{equation}\label{p-adicNorm}
    |x|_p := p^{-\nu(x)}
\end{equation}
for all $x\in\bbQ\setminus\{0\}$ and by necessity $|0|_p := 0$.
The completion of $\bbQ$ via $|\cdot|_p$
is the field of \emph{$p$-adic numbers}
and denoted by $\bbQ_p$.
The $p$-adic norm $\absolute{\cdot}_p$ satisfies the usual properties of a norm on a field, whereby the triangle inequality holds true in a stricter form:
\[
\absolute{x+y}_p\le\max\mathset{\absolute{x}_p,\absolute{y}_p}\,,
\]
which turns $\absolute{\cdot}_p$ into a \emph{Non-Archimedean} norm on $\mathbb{Q}$ and on $\mathbb{Q}_p$, respectively. The corresponding distance
\[
d_p(x,y)=\absolute{x-y}_p
\]
is consequently an ultrametric on $\mathbb{Q}_p$. In particular, $p$-adic discs never strictly overlap, i.e.\ any two of them are either disjoint, or one contains the other.

\begin{rem}
    In fact,
    every non-trivial norm on $\bbQ$
    is equivalent either to the usual Euclidean norm $|\cdot|$
    or to $|\cdot|_p$ for some prime number $p$.
 This is Ostrowski's Theorem \cite{Ostrowski1916}.     Hence any completion of the field $\bbQ$ of rational numbers 
    is  isomorphic either to $\bbR$ or to some $\bbQ_p$ (as topological fields).
\end{rem}

For a more concrete view on $\bbQ_p$, look at the formal Laurent expansion 
\begin{align}\label{p-adicExpansion}
x=\sum_{k=\nu}^\infty x_k p^k.
\end{align}
for some $\nu \in \bbZ$ and digits
$x_k\in\{0,\dots,p-1\}$,
where $x_\nu\neq 0$.

It is not hard to see that a norm like  (\ref{p-adicNorm}) can be defined now for $\nu(x)$ being the index of the first
non-vanishing digit $x_k$ in this formal Laurent series.
It turns out that the set of all such Laurent series equipped with this norm is
 $\bbQ_p$, and 
 one can embed $(\bbQ,|\cdot|_p) \rarr (\bbQ_p,|\cdot|_p)$ 
via an isometry whose image is dense in $\bbQ_p$. In this way,  $\bbQ_p$ is given concretely by Laurent expansions of the $p$-adic numbers. Cf.\ \cite{Gouvea2020} for more on $p$-adic numbers.

\begin{rem}\label{nonUniqueExpansion}
Notice that
\[
\absolute{p^k}_p=p^{-k}
\]
for $k\in\mathbb{Z}$, and hence each Laurent series (\ref{p-adicExpansion}) is convergent in $\mathbb{Q}_p$.
This certainly cannot be said about Laurent series  in $\bbR$ with respect to the standard Euclidean norm. The convergence of each Laurent series in $\mathbb{Q}_p$ is induced by the ultrametricity of the $p$-adic absolute value $| \cdot |_p$.
    Moreover, the digit representation $(\dots,x_0,\dots,x_\nu)$ of a $p$-adic number $x\in\mathbb{Q}_p$ is unique, whereas uniqueness of digit-representation does not hold for the $p$-adic expansions of real numbers as
    \[
        \pm \sum_{k=-\infty}^m x_k p^k\,,
    \]
    because e.g.\ the number $1\in\mathbb{R}$ has two such $p$-adic expansions.
\end{rem}

A bridge between $\mathbb{Q}_p$ and the non-negative real numbers $\mathbb{R}^+$ is given by the so-called \emph{Monna map}:  

\begin{defi}
  The \emph{Monna map}  $\rho:\mathbb{Q}_p \rightarrow \mathbb{R}_+$ is defined by 
$$\rho: \sum_{i=\gamma}^{\infty}x_ip^{i}\mapsto \sum_{i=\gamma}^{\infty} x_i p^{-i-1},$$
where $x_i=0,...,p-1$ and $\gamma\in \mathbb{Z}$.    
\end{defi}

We present some results related to this connection.  The first one is that the Monna map is Lipshitz in the following sense:

\begin{lem}
The map $\rho$ satisfies the inequality
\[
|\rho(x)-\rho(y)|\leq |x-y|_p
\]
for every $x,y\in \mathbb{Q}_p$.  In other words, the Monna map is a $1$-Lipshitz function.
\end{lem}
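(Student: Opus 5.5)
Write $x=\sum_{i\ge\gamma}x_ip^i$ and $y=\sum_{i\ge\gamma}y_ip^i$ with a common starting index $\gamma$, padding with zero digits if needed. If $x=y$ there is nothing to prove. Otherwise let $m$ be the smallest index with $x_m\neq y_m$. The first step is to identify $|x-y|_p$. Since the digits agree below $m$, we have
\[
x-y=\sum_{i\ge m}(x_i-y_i)p^i = p^m\Bigl((x_m-y_m)+\sum_{i>m}(x_i-y_i)p^{i-m}\Bigr).
\]
Here $x_m-y_m$ is a nonzero integer of absolute value at most $p-1$, so it is a $p$-adic unit, while the remaining sum lies in $p\mathbb{Z}_p$. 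The bracket is therefore a unit, and $|x-y|_p=p^{-m}$.

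The second step bounds the real side. The digits agree below index $m$, so
\[
\rho(x)-\rho(y)=\sum_{i\ge m}(x_i-y_i)p^{-i-1}.
\]
Each difference satisfies $|x_i-y_i|\le p-1$, which gives
\[
|\rho(x)-\rho(y)|\le \sum_{i\ge m}(p-1)p^{-i-1}=(p-1)p^{-m-1}\cdot\frac{1}{1-p^{-1}}=p^{-m}.
\]
This is exactly $|x-y|_p$, which proves the claim.

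The only points needing care are bookkeeping rather than real obstacles. First, the two series may start at different indices; padding with leading zeros to a common $\gamma$ handles this, and the definition of $\rho$ is unaffected by leading zero digits. Second, one must make sure the digit expansion is unique on the $p$-adic side (Remark~\ref{nonUniqueExpansion}), so that $\rho$ is well defined and $m$ is determined. The non-uniqueness of real expansions causes no trouble, because we only estimate the real number $\rho(x)-\rho(y)$ by a convergent series, and absolute convergence lets us subtract termwise. Equality in the bound is possible, for example when $\rho(x)=\rho(y)$ arises from two distinct $p$-adic numbers, but that is consistent with the inequality.
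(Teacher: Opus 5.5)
Your proof is correct and complete. The paper gives no argument of its own for this lemma; it only cites \cite[Lemma 3]{Kozyrev2011}. Your digit comparison is the standard proof behind that citation, so you have in effect supplied the missing self-contained argument.

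It can also be phrased through Lemma \ref{PropertiesMonna}. If $|x-y|_p=p^{-m}$, then $x$ and $y$ lie in a common disc of radius $p^{-m}$. The image of that disc under $\rho$ is a closed interval of length $p^{-m}$, which gives the bound immediately.

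One small slip is in your closing aside. Distinct $x\neq y$ with $\rho(x)=\rho(y)$ give \emph{strict} inequality, not equality. Equality occurs instead when all digit differences have the same sign and maximal size, e.g.\ $x=-p^m=\sum_{i\ge m}(p-1)p^i$ and $y=0$, where $\rho(x)=p^{-m}=|x|_p$.
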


\begin{proof}
\cite[Lemma 3]{Kozyrev2011}.
\end{proof}

\begin{rem}
In the original paper  \cite{Monna1952} by A.F.\ Monna, one can find a proof that $\rho$ is continuous.
\end{rem}

\begin{lem}\label{PropertiesMonna}
The map $\rho$ satisfies 
\begin{align*}
\rho\left( p^mn+p^k\mathbb{Z}_p\right)&= p^{-m}\rho(n)+[0,p^{-k}]
\\
\rho\left(\mathbb{Q}_p\setminus \{p^mn+p^k\mathbb{Z}_p\}\right)
&=  \mathbb{R}_+ \setminus\left(p^{-m}\rho(n)+[0,p^{-k}]\right)
\end{align*}
for $n\in \mathbb{Q}_p / \mathbb{Z}_p$ and $m,k \in \mathbb{Z}$ with $k\geq m$, and it is a bijection
\[
\mathbb{Q}_p\setminus N_p\cong\mathbb{R}_+\,,
\]
where 
\[
N_p=\mathset{\frac{a}{p^n}\mid a\in\mathbb{N}\setminus\mathset{0},\;n\ge1}\,,
\]
viewed as a subset of $\mathbb{Q}_p$.
\end{lem}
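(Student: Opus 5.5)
The plan is to reduce everything to a digit-by-digit description of $\rho$ and treat the three claims in turn: the image of a disc, the image of its complement, and the bijection off $N_p$.

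\textbf{Step 1: the image of a disc.} Write $a=p^mn$ with $n\in\mathbb{Q}_p/\mathbb{Z}_p$, represented by its finite polar part $n=\sum_{i<0}n_ip^i$. Then $a$ has only finitely many nonzero digits, and they sit in positions $<m$. Any element of the disc $a+p^k\mathbb{Z}_p$, with $k\ge m$, has the form
\[
x=a+\sum_{i\ge k}x_ip^i .
\]
I would first choose the representative of $n$ so that the digits of $a$ lie in positions $<k$. Then there are no carries, and by the definition of $\rho$,
\[
\rho(x)=\rho(a)+\sum_{i\ge k}x_ip^{-i-1}.
\]
Digitwise one checks $\rho(p^mn)=p^{-m}\rho(n)$, because multiplying by $p^m$ shifts exponents by $m$ and $\rho$ reverses the shift. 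The tail $\sum_{i\ge k}x_ip^{-i-1}$ ranges over all real numbers whose base-$p$ expansion starts at position $-k-1$. These are exactly the numbers in $[0,p^{-k}]$, since every real in that interval has at least one such expansion, and the all-$(p-1)$ tail gives the endpoint $p^{-k}$. This proves the first identity.

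\textbf{Step 2: the complement.} I would use that $\mathbb{Q}_p$ is partitioned into discs of radius $p^{-k}$. Their images are the intervals $p^{-k}j+[0,p^{-k}]$, for $j\in\mathbb{N}$, which cover $\mathbb{R}_+$. So the image of the complement of one disc is the union of the other intervals. This union meets the given interval only in its endpoints, since adjacent closed intervals share an endpoint. The stated identity therefore holds up to this boundary issue, read with the half-open or almost-everywhere convention. I expect this endpoint bookkeeping to be the main obstacle, and I would state it honestly rather than hide it.

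\textbf{Step 3: the bijection.} Surjectivity of $\rho$ onto $\mathbb{R}_+$ follows from Step 1 applied to the cover by the discs $p^{-k}\mathbb{Z}_p$. Non-injectivity occurs exactly where a real number has two base-$p$ expansions, namely the $p$-adic rationals $a/p^n>0$. Each of them has one expansion that terminates and one that ends in repeating $(p-1)$'s. Removing the preimages that correspond to one of the two expansions makes $\rho$ injective, because base-$p$ expansions are unique apart from this ambiguity. The delicate point is matching the removed set with $N_p\subset\mathbb{Q}_p$. The finite-digit $p$-adic numbers $\sum_{i\ge\gamma,\,\text{finite}}x_ip^i$ are exactly the elements of $\mathbb{N}[1/p]$, and $\rho$ sends them to the terminating real expansions. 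Removing $N_p$, i.e.\ those elements that have at least one negative power, therefore removes precisely one preimage of each dyadic-type real with denominator $p^n$, $n\ge1$. The other preimage, the one with infinitely many $(p-1)$-digits, is kept. The nonnegative integers remain and still cover the integer reals, whose two expansions correspond to $\mathbb{N}$ and to infinite $(p-1)$-tails. I would have to check whether these also need handling, and adjust the claim if they do.
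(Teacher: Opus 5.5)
Your Steps 1 and 2 are sound. Your diagnosis in Step 2 is also right: the second identity is false as literally stated. The right endpoint of the interval, and the left one unless it is $0$, is also the image of a point in a neighbouring disc. For instance, $\rho(p^{-1})=1$ lies in $\rho(\mathbb{Q}_p\setminus\mathbb{Z}_p)$ and also in $[0,1]$. The paper only cites Monna and Kozyrev at this point and does not address this.

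The genuine gap is Step 3. You leave it open, and the doubt you end with comes from a misreading and a miscalculation.

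\begin{itemize}
\item \emph{The misreading.} $N_p$ is not the set of elements ``with at least one negative power''. Nothing requires $a$ to be prime to $p$, so taking $a=p^nc$ shows that every positive integer $c$ lies in $N_p$. Hence $N_p=\mathbb{N}[1/p]\setminus\{0\}$.
\item \emph{The miscalculation.} The positive integers of $\mathbb{Q}_p$ lie in $\mathbb{Z}_p$, so they map into $[0,1]$, not onto the integer reals. For example, $\rho(1)=p^{-1}=\rho(-p)$, because $-p=\sum_{i\ge1}(p-1)p^i$. The integer reals come from polar parts instead, e.g.\ $\rho(p^{-1})=1=\rho(-1)$.
\end{itemize}

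So under your reading, injectivity would genuinely fail on $(0,1)\cap\mathbb{Z}[1/p]$. Your sentence that removing $N_p$ deletes one preimage of every real $b/p^s$ would then be false.

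With the literal reading, your own observation that the finite-digit $p$-adic numbers are exactly $\mathbb{N}[1/p]$ closes the argument:
\begin{itemize}
\item The nonzero finite-digit $p$-adic numbers are exactly $\mathbb{N}[1/p]\setminus\{0\}=N_p$.
\item $\rho$ maps them bijectively onto the terminating expansions of the positive reals $b/p^s$.
\item The other preimage of such a real has infinitely many nonzero digits. By uniqueness of $p$-adic digit expansions, it is not in $N_p$.
\item Every remaining real, including $0$, has a single preimage, and that preimage is not in $N_p$ either.
\end{itemize}
Hence $\rho$ restricted to $\mathbb{Q}_p\setminus N_p$ is a bijection onto $\mathbb{R}_+$, and no adjustment of the claim is needed.
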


\begin{proof}
The proof is quite straightforward, and more or less explicitly contained in \cite{Monna1952}. A calculation of the first assertion is given in \cite[Lemma 4]{Kozyrev2002}.
\end{proof}

\begin{rem}
The subset $N_p\subset\mathbb{Q}_p$ corresponds to the locus in $\mathbb{R}_+$ where the real $p$-adic expansion is non-unique. 
From Lemma \ref{PropertiesMonna}, it follows that there is an induced  map
\[
\overline{\rho}\colon\mathbb{Q}_p / \mathbb{Z}_p \rightarrow \mathbb{N}
\]
 by the Monna map, and this is
 a bijection.  
\end{rem}

The next results concern the transport of measure and integration via the Monna map.

\begin{lem}\label{Haar2Lebesgue}
The Monna map $\rho$ takes the Haar measure $\mu$ on $\mathbb{Q}_p$ to the Lebesgue measure $\lambda$ on $\mathbb{R}_+$. In other words,   $\rho$ takes measurable sets in $\mathbb{Q}_p$ to measurable sets in $\mathbb{R}_+$, and it holds true that
\[
\int_A d\mu= \int_{\rho(A)} d\lambda
\]
for any measurable set $A\subset \mathbb{Q}_p$.
\end{lem}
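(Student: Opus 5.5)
The plan is to reduce everything to discs, where Lemma \ref{PropertiesMonna} gives exact control, and then to pass to all measurable sets by a uniqueness-of-measures argument.

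\textbf{Step 1: discs.} For a disc $A=a+p^k\mathbb{Z}_p$, write the center as $a=p^m n$ with $n\in\mathbb{Q}_p/\mathbb{Z}_p$ and $k\ge m$. Lemma \ref{PropertiesMonna} then says $\rho(A)=p^{-m}\rho(n)+[0,p^{-k}]$. Hence
\[
\mu(A)=p^{-k}=\lambda(\rho(A)).
\]
Here $\mu$ is normalised by $\mu(\mathbb{Z}_p)=1$. This matches $\rho(\mathbb{Z}_p)=[0,1]$.

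\textbf{Step 2: from discs to all Borel sets.} I will use that $\rho$ restricts to a bijection $\mathbb{Q}_p\setminus N_p\to\mathbb{R}_+$ (Lemma \ref{PropertiesMonna}) and that $N_p$ is countable. Then $\mu(N_p)=0$, and the set $\rho(N_p)$ of $p$-adic rationals in $\mathbb{R}_+$ is also countable, so it is $\lambda$-null.

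Next, define the pushforward measure $\nu(B)=\mu(\rho^{-1}(B))$ on Borel sets $B\subset\mathbb{R}_+$. This is a Borel measure because $\rho$ is continuous, by the 1-Lipschitz lemma.

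The preimage of a closed $p$-adic interval $[c,c+p^{-k}]$ is the corresponding disc together with at most countably many extra points. These extra points come from the endpoints having two expansions, and they lie in $N_p$. So Step 1 gives $\nu([c,c+p^{-k}])=p^{-k}=\lambda([c,c+p^{-k}])$.

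These intervals, together with the empty set, form a $\pi$-system (up to null endpoints) generating the Borel $\sigma$-algebra of $\mathbb{R}_+$. Both $\nu$ and $\lambda$ are $\sigma$-finite on it, since $\mathbb{R}_+$ is covered by countably many intervals $[j,j+1]$. By Dynkin's $\pi$-$\lambda$ theorem, $\nu=\lambda$ on all Borel sets.

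\textbf{Step 3: back to $\mu$ and images.} Let $A\subset\mathbb{Q}_p$ be measurable and write $A'=A\setminus N_p$. Since $\rho$ is a bijection off $N_p$, we have $\rho^{-1}(\rho(A'))\setminus N_p=A'$.

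Measurability of $\rho(A')$ follows because the inverse $\rho^{-1}\colon\mathbb{R}_+\setminus\rho(N_p)\to\mathbb{Q}_p\setminus N_p$ is Borel. It maps each interval to a disc minus countably many points, so it preserves generators. Then $\rho(A)$ differs from $\rho(A')$ by a countable set, so $\rho(A)$ is measurable too.

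Finally,
\[
\lambda(\rho(A))=\lambda(\rho(A'))=\nu(\rho(A'))=\mu(A')=\mu(A).
\]
If $A$ is only Haar-measurable (completion), sandwich it between Borel sets of equal measure.

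\textbf{Main obstacle.} The delicate point is the bookkeeping at the non-unique expansions, i.e.\ showing that the image of a Borel set is Borel. This requires Borel measurability of the inverse branch. I will handle it with the generator argument above, noting that all discrepancies are countable and therefore null for both measures.
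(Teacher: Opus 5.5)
Your proposal is correct and follows the same route as the paper. The paper's proof cites Monna and observes, via Lemma \ref{PropertiesMonna}, that $\rho$ maps discs onto closed intervals of equal measure and that discs generate the $\sigma$-algebras on both sides; your $\pi$-$\lambda$ uniqueness argument, the null-set bookkeeping at $N_p$, and the Borel inverse branch just make that sketch rigorous. One harmless slip: the extra preimages of interval endpoints need not lie in $N_p$ (for $p=2$, the point $-2$ lies in $\rho^{-1}([1/2,1])$ but not in $1+2\mathbb{Z}_2$, and it is a negative rational), but you only use that they are finitely many, hence $\mu$-null, so nothing breaks.
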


\begin{proof}
\cite[Page 8]{Monna1952}. Lemma \ref{PropertiesMonna} exhibits the images of discs as being closed intervals (also discs!), and discs generate the $\sigma$-algebras both, on the $p$-adic, as well as on the real side.
\end{proof}

\begin{rem}\label{isometry}
An immediate consequence of Lemma \ref{Haar2Lebesgue} is that that the pull-back map
\[
\rho^{*}:L^{2}(\mathbb{R}_+,\lambda)\rightarrow L^{2}(\mathbb{Q}_p,\mu),\; 
 \rho^*(f)(x)=f(\rho(x))
 \]
 defines an  isometry between Hilbert spaces. 
\end{rem}
    %


\begin{figure}[ht]
    \centering
\includegraphics[width=1\textwidth]{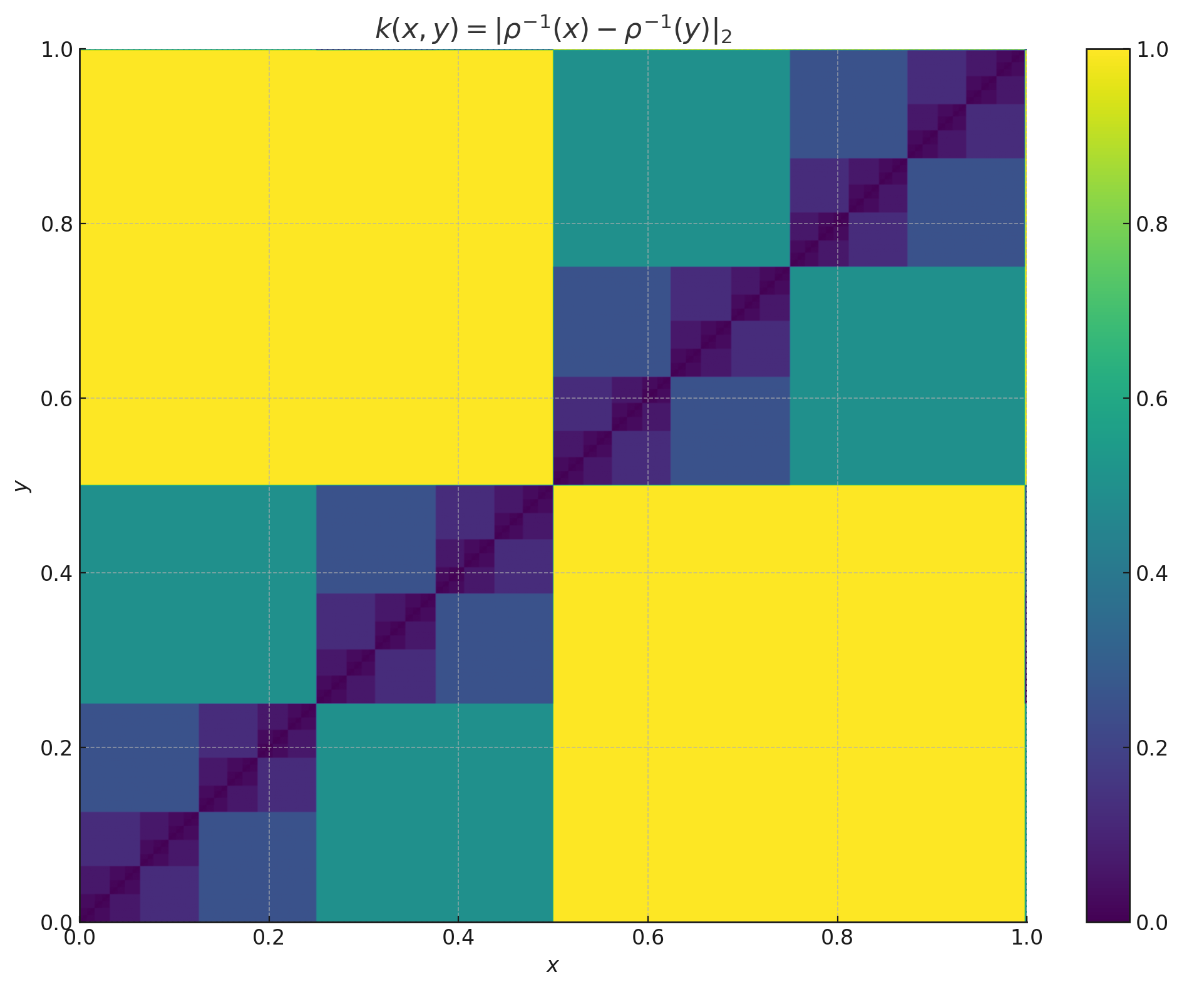}
    \caption{The map $k(x,y)=|\rho^{-1}(x)-\rho^{-1}(y)|_p$ for $p=2$.}
    \label{fig:mesh1}
\end{figure}

\begin{figure}
\includegraphics[width=1\textwidth]{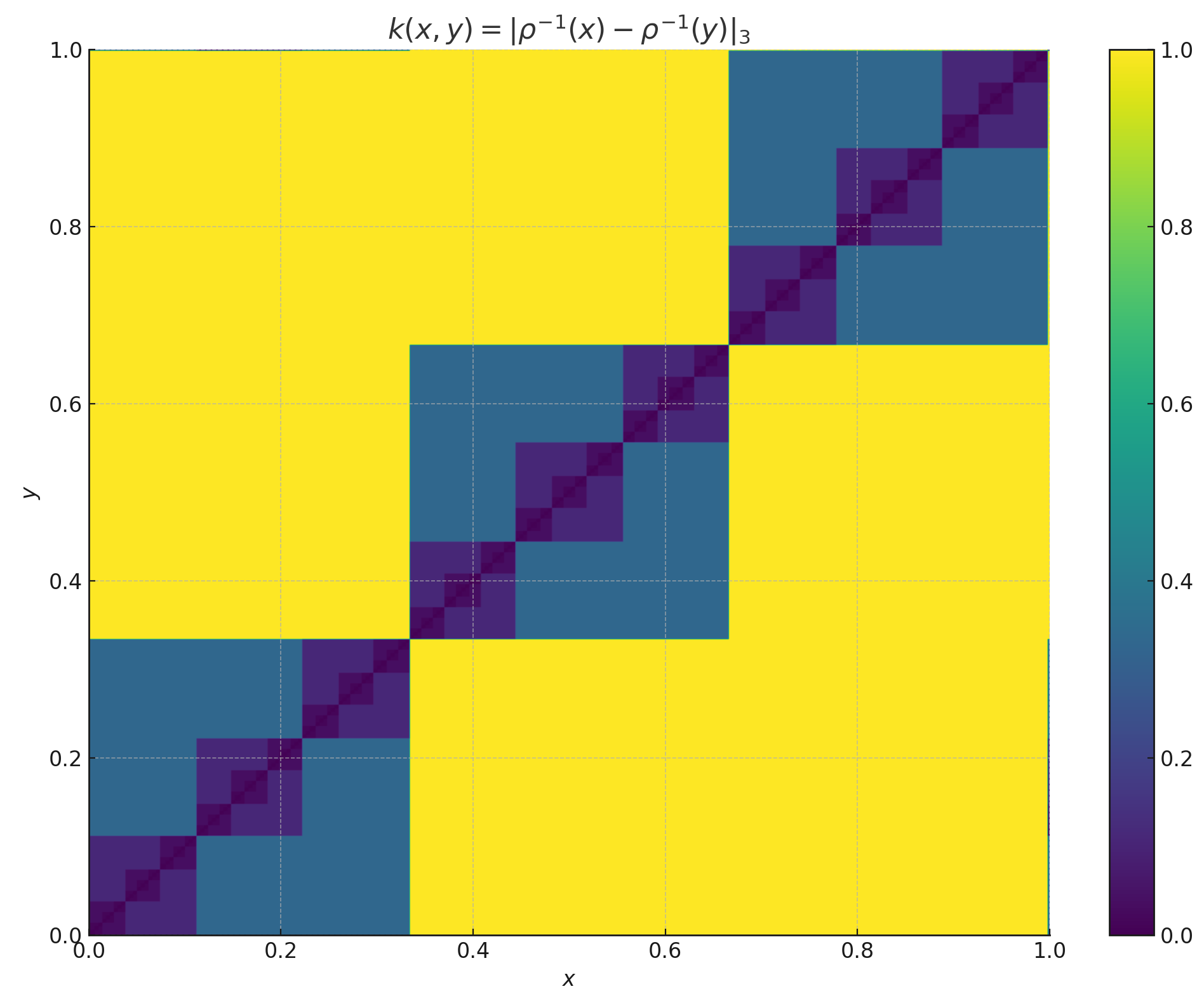}
    \caption{The map $k(x,y)=|\rho^{-1}(x)-\rho^{-1}(y)|_p$ for $p=3$.}
    \label{fig:mesh2}
\end{figure}

\begin{rem}
Lemma \ref{Haar2Lebesgue} is used by S.V.\ Kozyrev to show that real Haar wavelets correspond uniquely to the functions known in $p$-adic analysis as Kozyrev wavelets in the case $p=2$ \cite[Theorem 7]{Kozyrev2002}. The significance of his result is that Kozyrev wavelets form an orthonormal basis of $L^2(\mathbb{Q}_p,\mu)$ consisting of eigenfunctions for the Vladimir operator. And for the  Laplacian integral operators in  the present article, this fact will be used for being able to visualise $p$-adic diffusion via the Monna map, and moreover to enable $p$-adic processes to be viewed as certain kinds of processes on a real domain.   Figures \ref{fig:mesh1} and \ref{fig:mesh2} are  visualisations of the  $p$-adic kernel function transported to the reals:
\[
k(x,y)=\absolute{\rho^{-1}(x)-\rho^{-1}(y)}_p
\]
via the Monna map $\rho$.
\end{rem}

A (generalised) wavelet basis of $L^2(\mathbb{R}_+,\lambda)$ consists of functions of the form 
\begin{equation}\label{genwavelet}
    \psi_{rnj}^{(p)}(x)=p^{-r/2} \Psi_j^{p}(p^{-r}x-n)
\end{equation}
with $r\in \mathbb{Z}$, $n\in \mathbb{N}$, and 
\[
\Psi_j^{p}(x)=\sum_{\ell=0}^{p-1}e^{2\pi\sqrt{-1}  j \ell p^{-1}}1_{[\ell p^{-1},(\ell+1)p^{-1}]}(x)
\]
for $j=1,...,p-1$, and where $1_{A}$ is the indicator function of the set $A\subset \mathbb{R}_+$\,.  

\begin{lem}
The generalised wavelets form an orthonormal basis of $L^2(\mathbb{R}_+,\lambda)$.
\end{lem}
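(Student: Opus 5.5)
Two routes are available: a direct real-variable proof, or transport of Kozyrev's $p$-adic wavelet basis via the Monna map. I would do the direct real-variable proof, which is self-contained, and note the transport argument as a cross-check.

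\emph{Orthonormality.} Fix $r\in\mathbb{Z}$ and $n\in\mathbb{N}$. The function $\psi^{(p)}_{rnj}$ is supported on $I_{rn}=[p^rn,p^r(n+1)]$. Each of its $p$ subintervals of length $p^{r-1}$ carries the constant value $p^{-r/2}e^{2\pi\sqrt{-1}j\ell/p}$.
\begin{itemize}
\item Normalisation: the $L^2$-norm squared is $p\cdot p^{r-1}\cdot p^{-r}=1$.
\item Same $(r,n)$, different $j,j'$: the inner product is $p^{-1}\sum_{\ell=0}^{p-1}e^{2\pi\sqrt{-1}(j-j')\ell/p}$. Since $0<|j-j'|<p$, the root of unity is nontrivial and this sum vanishes.
\item Same $r$, different $n$: the supports overlap only in a null set.
\item Different $r<r'$: either the intervals $I_{rn}$ and $I_{r'n'}$ are essentially disjoint, or $I_{rn}$ lies inside one of the $p$ subintervals of $I_{r'n'}$. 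In the latter case $\psi_{r'n'j'}$ is constant on $I_{rn}$, and $\int\psi_{rnj}=p^{-r/2}p^{r-1}\sum_\ell e^{2\pi\sqrt{-1}j\ell/p}=0$ because $j\neq0$.
\end{itemize}
The nesting claim is the key structural fact: the $p$-adic intervals $[p^rn,p^r(n+1)]$ form a nested hierarchy, just as $p$-adic discs do.

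\emph{Completeness.} This is the main step. It suffices to show that the orthogonal complement is trivial.

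Let $V_r$ be the space of $L^2$ functions that are constant on every interval $I_{rn}$, $n\in\mathbb{N}$. These spaces are nested, $V_r\subset V_{r-1}$. For fixed $(r,n)$, the functions $1_{I_{rn}}$ and $\psi_{rnj}$ ($j=1,\dots,p-1$) together span the $p$-dimensional space of functions on $I_{rn}$ that are constant on its $p$ subintervals. This is the discrete Fourier transform on $\mathbb{Z}/p$. It gives the orthogonal decomposition
\[
V_{r-1}=V_r\oplus W_r,\qquad W_r=\overline{\operatorname{span}}\{\psi_{rnj}\}.
\]

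Now suppose $g\perp$ all wavelets. Then for every $R$, the projection of $g$ onto $V_{R}$ equals the projection of $g$ onto every $V_{r}$ with $r\ge R$.
\begin{itemize}
\item As $r\to-\infty$: the union $\bigcup_r V_r$ is dense in $L^2(\mathbb{R}_+)$, since step functions on $p$-adic intervals approximate indicators of intervals, hence all of $L^2$. So $E_{-\infty}g=g$.
\item As $r\to+\infty$: the projection onto $V_r$ is averaging over intervals of length $p^r$. Its norm is bounded by $\|g\|_{1,\mathrm{loc}}$-type estimates that tend to $0$. Concretely, for compactly supported $g$ with support in $[0,p^{r_0}]$, the projection $P_rg=p^{-r}\left(\int g\right)1_{[0,p^r]}$ for $r\ge r_0$ has norm $p^{-r/2}\left|\int g\right|\to0$. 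A density argument handles general $g$.
\end{itemize}
Combining these, $g=P_rg$ for all $r$ and $\|P_rg\|\to0$, so $g=0$. This is the part requiring care: establishing the two limit properties (density and vanishing intersection $\bigcap_r V_r=\{0\}$). Note that the intersection is trivial precisely because $\mathbb{R}_+$ has infinite measure; on $[0,1]$ alone, constants would have to be added.

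\emph{Cross-check via the Monna map.} Alternatively, one can show that $\rho^*\psi^{(p)}_{rnj}$ is exactly a Kozyrev wavelet on $\mathbb{Q}_p$. By Lemma~\ref{PropertiesMonna}, discs map to the intervals $I_{rn}$, and the $p$ subdiscs map to the $p$ subintervals, with the character $\chi(p^{r-1}\cdot)$ taking the value $e^{2\pi\sqrt{-1}j\ell/p}$ on the appropriate piece. One then invokes the known ONB property of Kozyrev wavelets together with the isometry of Remark~\ref{isometry}, which is in fact unitary because $\rho$ is a bijection off null sets. The obstacle on this route is matching the digit labelling $\ell$ under $\rho$, which reverses digit order. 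The direct proof above avoids this.
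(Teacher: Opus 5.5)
Your proof is correct and follows the approach the paper intends. The paper's proof only says that the classical $p=2$ Haar argument generalises, and you carry out that generalisation in full: the discrete Fourier transform on $\mathbb{Z}/p$ replaces the $\pm1$ Haar pattern in the orthonormality check and in the splitting $V_{r-1}=V_r\oplus W_r$, and completeness follows from the dense union and the trivial intersection $\bigcap_r V_r=\{0\}$. Your remark that the constant function must be added on $[0,1]$ also matters for the paper's later use of these wavelets on the unit interval.
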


\begin{proof}
The proof for the case $p=2$ generalises in a straightforward manner.
\end{proof}

\begin{rem}
Needless to say,
but it is immediate that the generalised wavelets
pull back to an orthonormal basis of the
Vladimirov operator under the Monna map,
in order to state Kozyrev's result of
\cite[Theorem 7]{Kozyrev2002} for $p > 2$.
\end{rem}

Assume a  function 
\[
w\colon X\times X\to\mathbb{R},\;(x,y)\mapsto w(x,y)
\]
defined on the space $X = [0,1] \subset \mathbb{R}$,
  well-defined and essentially bounded, implying that $w \in L^{\infty}([0,1] \times [0,1])$. Additionally, assume that $w$ is symmetric, i.e.\  $w(x,y) = w(y,x)$. 

\begin{lem}\label{boundedOp}
The assignment     \begin{equation}\label{OurOperator}
        C[0,1]\ni u(x) \mapsto
            \mathcal{W} u(x) = \int_0^1 w(x,y)( u(y) - u(x))\mathop{dy}
    \end{equation}
    is a well defined linear bounded operator on $C[0,1]$.
    \end{lem}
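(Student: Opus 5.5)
The plan is to split the operator into a multiplication part and an integral part and to estimate each in the supremum norm. For $u\in C[0,1]$ and $x\in[0,1]$ we write
\[
\mathcal{W}u(x)=\int_0^1 w(x,y)\,u(y)\,dy-u(x)\,\deg_w(x),\qquad \deg_w(x):=\int_0^1 w(x,y)\,dy .
\]

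\textbf{Step 1: well-definedness and linearity.} Since $w\in L^\infty([0,1]\times[0,1])$, Fubini's theorem shows that for almost every $x$ the section $y\mapsto w(x,y)$ is measurable and essentially bounded by $\|w\|_\infty$. As $u$ is bounded and measurable, both integrals above exist. If necessary we modify $w$ on a null set so that this holds for every $x$; symmetry of $w$ is not needed at this stage. Linearity in $u$ is then immediate from linearity of the integral.

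\textbf{Step 2: the norm bound.} For every $x$,
\[
\absolute{\mathcal{W}u(x)}\le \int_0^1 \absolute{w(x,y)}\bigl(\absolute{u(y)}+\absolute{u(x)}\bigr)\,dy\le 2\|w\|_\infty\|u\|_\infty .
\]
Taking the supremum over $x$ gives $\|\mathcal{W}u\|_\infty\le 2\|w\|_\infty\|u\|_\infty$. Hence $\mathcal{W}$ is bounded with operator norm at most $2\|w\|_\infty$, provided it maps into the target space.

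\textbf{Step 3: continuity of $\mathcal{W}u$ (the main obstacle).} We must show that $\mathcal{W}u$ is again continuous. For $x,x'\in[0,1]$ we estimate
\[
\absolute{\mathcal{W}u(x)-\mathcal{W}u(x')}\le 2\|u\|_\infty\int_0^1\absolute{w(x,y)-w(x',y)}\,dy+\|w\|_\infty\absolute{u(x)-u(x')} .
\]
The second term tends to $0$ as $x'\to x$ because $u$ is continuous. So everything reduces to proving that
\[
x\mapsto w(x,\cdot)\in L^1[0,1]
\]
is continuous. This does \emph{not} follow from $w\in L^\infty$ alone; for instance, $w(x,y)=1_{[0,1/2]}(x)$ gives $\mathcal{W}1=0$ but $\mathcal{W}$ applied to $y$ yields a discontinuous function. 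Therefore my first attempt will be to use this $L^1$-continuity as the working hypothesis and to verify it for the kernels actually used later, namely $w(x,y)=f\bigl(\absolute{\rho^{-1}(x)-\rho^{-1}(y)}_p\bigr)$.

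For these kernels, Lemma~\ref{PropertiesMonna} shows that for $x\notin\rho(N_p)$ and $x'$ close to $x$, the points $x$ and $x'$ lie in the same $p$-adic interval of level $k$. Consequently the sections $w(x,\cdot)$ and $w(x',\cdot)$ agree outside a set of measure about $p^{-k}$, and by Lemma~\ref{Haar2Lebesgue} their $L^1$-distance is $O(\|f\|_\infty p^{-k})$. This argument breaks down at the countable set $\rho(N_p)$, where $x$ has two $p$-adic expansions and one-sided limits may differ.

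If that gap cannot be closed, I will prove the lemma with $C[0,1]$ replaced by the space of bounded Borel functions with the supremum norm. Steps 1 and 2 go through verbatim there, so $\mathcal{W}$ is a well-defined linear operator of norm at most $2\|w\|_\infty$ on that space. Alternatively, I will restrict to the continuous case by adding the $L^1$-continuity hypothesis explicitly.
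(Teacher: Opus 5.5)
Your Steps 1 and 2 are exactly the paper's proof. The paper's entire argument is the pointwise bound $\absolute{w(x,y)(u(y)-u(x))}\le 2\|w\|_\infty\|u\|_\infty$, and it never checks that $\mathcal{W}u$ is again continuous. You are right that this is the crux, and it cannot be settled positively: as a map $C[0,1]\to C[0,1]$ the lemma is false, even under the standing symmetry hypothesis and even for the kernels of Section 3 (see below).

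Your fallback is therefore the correct statement, not a last resort. By Steps 1--2, $\mathcal{W}$ is linear with norm at most $2\|w\|_\infty$ on bounded Borel functions, or from $C[0,1]$ into $L^\infty[0,1]$ as the paper's subsequent remark notes. That is all the paper's proof actually establishes.

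Neither fallback, however, rescues the Hille--Yosida--Ray argument of Theorem~\ref{teo:2.1} for the $p$-adic kernels. The first leaves $C[0,1]$. The second excludes every relevant kernel. Take $w(x,y)=f(\absolute{\rho^{-1}(x)-\rho^{-1}(y)}_p)$ with bounded $f$. The point $1/p$ has the two $\rho$-preimages $1$ and $-p$, which lie at $p$-adic distance $1$. The one-sided limits of $\mathcal{W}u$ at $x=1/p$ exist and differ by $\int_{\mathbb{Z}_p}\bigl(f(\absolute{1-\eta}_p)-f(\absolute{p+\eta}_p)\bigr)u(\rho(\eta))\,d\mu(\eta)$. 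This vanishes for all $u\in C[0,1]$ only if $f$ is constant on $\mathset{p^{-k}:k\ge0}$.

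The natural repair is to run your Step 3 for $P_f$ on $C(\mathbb{Z}_p)$. If $\absolute{\xi-\xi'}_p\le p^{-k}$, the strong triangle inequality gives $f(\absolute{\xi-\eta}_p)=f(\absolute{\xi'-\eta}_p)$ for all $\eta$ outside the ball $\xi+p^k\mathbb{Z}_p$, which has measure $p^{-k}$. So on $\mathbb{Z}_p$ your estimate yields continuity at every point.

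The genuine defect in your write-up is the counterexample. The kernel $w(x,y)=1_{[0,1/2]}(x)$ is not symmetric. Moreover, for $u(y)=y$ it gives $\mathcal{W}u(x)=(\tfrac12-x)\,1_{[0,1/2]}(x)$, which is continuous because it vanishes at $x=\tfrac12$.

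Take instead $f(1)=0$ and $f(p^{-k})=1$ for $k\ge1$. Then almost everywhere $f(\absolute{\rho^{-1}(x)-\rho^{-1}(y)}_p)=\sum_{m=0}^{p-1}1_{I_m}(x)1_{I_m}(y)$ with $I_m=[m/p,(m+1)/p)$, a bounded symmetric kernel of exactly the paper's type. For $u(y)=y$ and $x\in I_m$ one gets $\mathcal{W}u(x)=p^{-1}(c_m-x)$, with $c_m$ the midpoint of $I_m$. So at each $m/p$ with $1\le m\le p-1$, the left limit is $-\tfrac{1}{2p^2}$ and the right limit is $\tfrac{1}{2p^2}$, and no change on a null set makes $\mathcal{W}u$ continuous.

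This also answers your open question. Your $p^{-k}$ estimate correctly gives continuity off the $p$-adic rationals, and the failure at $\rho(N_p)$ is real, not an artefact of the argument.
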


\begin{proof}
 Let    $u\in C[0,1]$. Then due to the boundedness of $w$, it holds true that
 \begin{align*}
 |w(x,y) (u(y) - u(x))| 
        & \leq \max_{x,y\in X} | w(x,y)| \cdot 
        \left(\max_{y \in X} |u(y)| + \max_{x \in X}|u(x)|\right)
\\
 &        \leq \max_{x,y\in X} | w(x,y)| 
        \cdot 2 \cdot\max_{y \in X} |u(y)| 
             = c \|u\|_{\infty}\,,\quad c \in \mathbb{R}\,.
 \end{align*}
This shows that the operator $\mathcal{W}$ is bounded for all $u\in C([0,1])$. 
\end{proof}

\begin{rem}
Notice that $\mathcal{W}$ is a closed operator on $C[0,1]$, because it is a bounded linear operator by Lemma \ref{boundedOp}. Moreover, it is well-defined on $L^{\infty}[0,1]$.
\end{rem}
    
The Cauchy problem for $u(\cdot,t)\in C[0,1]$
    and $u(x,\cdot)\in C^1[0,\infty)$ is spelled out as follows:
\begin{equation}\label{CauchyProblem}
    \begin{cases}
      \frac{\partial u}{\partial t} (x,t)=\mathcal{W}u(x,t)\\
      u(x,0)=u_0(x)\in C[0,1]\,.
    \end{cases}
\end{equation}
  
\begin{teo}
\label{teo:2.1}
There exists a probability measure $p_t(x,\cdot)$, $t\geq 0$, with $x\in [0,1] $, on the Borel $\sigma$-algebra of $[0,1]$, such that the Cauchy problem (\ref{CauchyProblem}) has a unique solution of the form 
\[u(x,t)=\int_0^1 u_0(y)p_t(x,dy).\]
In addition, $p_t(x,\cdot)$ is the transition distribution of a strong Markov process, whose paths are right-continuous and have no discontinuities other than jumps. 
\end{teo}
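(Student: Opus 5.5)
The plan is to use the Hille--Yosida--Ray framework for Feller semigroups on the compact space $[0,1]$, in the form found in Ethier--Kurtz (Theorem 4.2.2 and Theorem 4.2.7). That framework turns a generator of a strongly continuous, positive, contraction semigroup on $C[0,1]$ that is conservative into a transition function and a strong Markov process with càdlàg paths.

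Step 1: construct the semigroup. By Lemma \ref{boundedOp}, $\mathcal{W}$ is bounded on $C[0,1]$, so $e^{t\mathcal{W}}=\sum_{n\ge 0} t^n\mathcal{W}^n/n!$ converges in operator norm and is a uniformly continuous semigroup. Hence $u(x,t)=e^{t\mathcal{W}}u_0(x)$ solves (\ref{CauchyProblem}) with $u(\cdot,t)\in C[0,1]$ and $u(x,\cdot)\in C^1$. Uniqueness is the standard one for bounded generators: if $v$ solves the problem with $v_0=0$, then $\|v(t)\|_\infty\le\|\mathcal{W}\|\int_0^t\|v(s)\|_\infty ds$, and Gronwall gives $v=0$.

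Step 2: positivity and conservativity. This is the main point to check carefully. Write $\mathcal{W}=K-m$, where $Ku(x)=\int_0^1 w(x,y)u(y)\,dy$ and $m(x)=\int_0^1 w(x,y)\,dy$ acts as multiplication.

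For positivity I need $w\ge 0$. This holds for the kernels of interest, and I will state it as an assumption; symmetry alone does not suffice.

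I also need $\mathcal{W}$ to map $C[0,1]$ into $C[0,1]$. Since $w$ is only $L^\infty$, this requires some continuity of $x\mapsto w(x,\cdot)$ in $L^1$. For $w(x,y)=f(|\rho^{-1}(x)-\rho^{-1}(y)|_p)$ one can argue via the disc structure of Lemma \ref{PropertiesMonna}. In general I would take this as part of the standing hypothesis behind Lemma \ref{boundedOp}.

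With $c=\|m\|_\infty$, write $e^{t\mathcal{W}}=e^{-ct}e^{t(K+c-m)}$. The operator $K+(c-m)$ is positive, so the semigroup is positive. Moreover $\mathcal{W}1=0$, so $e^{t\mathcal{W}}1=1$, and together with positivity this gives $\|e^{t\mathcal{W}}\|=1$. Alternatively, one can verify the positive maximum principle directly: if $u(x_0)=\max u\ge 0$, then $\mathcal{W}u(x_0)=\int w(x_0,y)(u(y)-u(x_0))\,dy\le 0$.

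Step 3: obtain the kernel. For fixed $x$ and $t$, the map $u_0\mapsto e^{t\mathcal{W}}u_0(x)$ is a positive linear functional on $C[0,1]$ taking $1$ to $1$. By the Riesz--Markov theorem it is integration against a unique Borel probability measure $p_t(x,\cdot)$, which gives the representation $u(x,t)=\int_0^1 u_0(y)\,p_t(x,dy)$.

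The semigroup property yields the Chapman--Kolmogorov equations. Measurability of $x\mapsto p_t(x,B)$ follows by a monotone class argument, starting from continuous functions.

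Step 4: construct the process. A Feller semigroup on a compact metric space, here strongly continuous and conservative, has a corresponding Markov process with càdlàg sample paths, and that process has the strong Markov property (Ethier--Kurtz, Theorem 4.2.7, or Dynkin). Right-continuity with left limits means exactly that the only discontinuities are jumps.

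One can also see the jump structure explicitly, because $\mathcal{W}$ is a bounded pure-jump generator. The process waits an exponential time with rate $m(x)$ and then jumps according to $w(x,y)\,dy/m(x)$; the uniform bound on the rates $m(x)$ rules out explosion.
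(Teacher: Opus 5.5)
Your proposal is correct and follows essentially the paper's route: a bounded generator with the positive maximum principle gives a conservative Feller semigroup on $C[0,1]$, Riesz representation gives the transition measures, and the standard Feller-process theorem gives a c\`adl\`ag strong Markov process. You also rightly make explicit the hypotheses $w\ge 0$ and $\mathcal{W}(C[0,1])\subset C[0,1]$, which the paper's maximum-principle computation and Lemma~\ref{boundedOp} use without stating. Your aside is wrong, though: the $p$-adic kernels $w(x,y)=f(\absolute{\rho^{-1}(x)-\rho^{-1}(y)}_p)$ generally do \emph{not} satisfy the second hypothesis --- for $p=2$ and $f=1_{[0,1/2]}$ one gets $\mathcal{W}u(x)=\int_{H(x)}u(y)\,dy-\tfrac12u(x)$, with $H(x)$ the half of $[0,1]$ containing $x$, and this jumps at $x=1/2$ whenever $\int_0^{1/2}u\neq\int_{1/2}^1u$ --- so for these kernels the argument must be run on $C(\mathbb{Z}_p)$ or on bounded Borel functions (where your explicit pure-jump construction still works), not on $C[0,1]$.
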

\begin{proof}
    Due to the symmetry of $w$, $w(x,y)\leq w(y,x)$. The operator $\mathcal{W}$ then satisfies the positive maximum principle, i.e.\ if \( u \in C[0,1]\) and \( \max_{x \in [0,1]} u(x) = u(x_0) \geq 0 \), then \((\mathcal{W}u)(x_0) \leq 0 \), because 
\begin{align*}
\mathcal{W}u(x_0)& =
 \int_0^1w(x_0,y)(u(y)-u(x_0))dy
\\
&\leq \max_{y \in X} \left\{u(y) - u(x_0)\right\}\int_0^1 w(x_0,y)dy \leq 0 
\end{align*}
   
    Now, for any fixed \( \lambda > \|\mathcal{W}\| \), the operator 
    given as a von Neumann series
    \[ \frac{1}{1 - \frac{1}{\lambda} \mathcal{W}} = 1 + \frac{1}{\lambda} \mathcal{W} + \frac{1}{\lambda^2}\mathcal{W}^2 + \cdots + \frac{1}{\lambda^n}\mathcal{W}^n + \cdots \] 
    is linear and bounded. Consequently, 
    \[ 
    \text{R}\! \left( 1 - \frac{1}{\lambda} \mathcal{W} \right) = C[0,1]\,,
    \] 
    and thus \( \text{R}\! \left( 1 - \frac{1}{\lambda} \mathcal{W} \right) \) is dense in $C[0,1]$. Since all conditions for applying the Hille–Yosida–Ray theorem are fullfilled, \( \mathcal{W} \) generates a Feller semigroup \( \{ e^{t\mathcal{W}} \}_{t \geq 0} \).\\
    \\
    Now, we only have to use the fact that every Feller Semigroup has attached  uniformly stochastically continuous $C_0$-transition measures $p_t(x,dy)$ for all $t \in [0,\infty)$, for which  
     \[e^{t\mathcal{W}}u_0(x)=\int_0^1 u_0(y)p_t(x,dy) \]
     holds true.
    Then  use the connection between $C_0$-transition measures and Markov processes and we get 
    that the attached Markov process has paths that are right continuous and have no discontinuities other than jumps. Theses relations are outlined in \cite[Chapter III]{taira1991boundary}.
\end{proof}

\section{$p$-Adic diffusion on the the unit interval}

Here, a kernel function on the unit interval is defined which takes its arguments from the $p$-adic integers via the Monna map. It defines an integral operator which can be approximated via local averaging or by sampling, and this leads to discrete approximations of the solutions for the corresponding diffusion equations.

\subsection{$p$-adic Integral Operators via the Monna map}

The integral operator here is given by a kernel function
$k(x,y)$ defined via the Monna map $\rho$ as
\[
k(x,y)=f\left(\absolute{\rho^{-1}(x)-\rho^{-1}(y)}_p\right)\,,
\]
and the corresponding integral operator is given by
\[
D_fu(x)=\int_0^1k(x,y)(u(y)-u(x))\,dy
=\int_0^1f\!\left(\absolute{\rho^{-1}(x)-\rho^{-1}(y)}_p\right)(u(y)-u(x))\,dy
\]
for functions $u\colon [0,1]\to\mathbb{C}$.

{
\newcommand{\wlr}{\psi_{rnj}^{(p)}}
\newcommand{\wlp}{\widehat{\psi}_{rnj}^{(p)}}
\begin{teo}
\label{teo 4.1}
For $f\in L^1\left([0,1],\lambda\right)$,
    the wavelets $\wlr$ (\ref{genwavelet})
    supported in $[0,1]$ are eigenvectors of the operator 
$D_f$ on $L^2\left([0,1],\lambda\right)$.
    The corresponding eigenvalues are given by
    \[
\lambda_{r}=
(1-p^{-1})\sum\limits_{k=0}^{r-1}
f\left(p^{-k}\right)p^{-k}
+p^{-r}f(p^{-r})\,,
    \]
 where $r \in \mathbb{N}$.
\end{teo}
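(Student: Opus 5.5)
The plan is to move the whole problem to the $p$-adic side with the Monna map and compute there, where the kernel becomes radial and the wavelets become Kozyrev wavelets. By Lemma \ref{Haar2Lebesgue} and Remark \ref{isometry}, $\rho^*$ is an isometry $L^2([0,1],\lambda)\to L^2(\mathbb{Z}_p,\mu)$. Lemma \ref{PropertiesMonna} says that $\rho$ maps $p$-adic discs onto closed intervals of the same measure, and that it is a bijection off the null set $N_p$. Hence
\[
\rho^*\circ D_f=P_f\circ\rho^*
\]
on $L^2$, with $P_f$ as in (\ref{eq:padicoperator1}). It therefore suffices to show that $h:=\rho^*\psi_{rnj}^{(p)}$ is an eigenfunction of $P_f$.

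First I would identify $h$ concretely. A wavelet $\psi_{rnj}^{(p)}$ supported in $[0,1]$ lives on an interval of length $p^{-r}$ with $r\in\mathbb{N}$. This interval is split into $p$ subintervals of length $p^{-r-1}$, and on the $\ell$-th subinterval the wavelet takes the value $p^{r/2}e^{2\pi\sqrt{-1}j\ell/p}$. By Lemma \ref{PropertiesMonna}, the support is the image of a disc $B=a+p^r\mathbb{Z}_p$, and the subintervals are the images of its $p$ maximal subdiscs $B_0,\dots,B_{p-1}$ of radius $p^{-r-1}$. So $h$ has three properties, up to a null set: it is supported on $B$, it is constant on each $B_\ell$, and $\int_B h\,d\mu=0$, because $\sum_\ell e^{2\pi\sqrt{-1}j\ell/p}=0$ for $1\le j\le p-1$. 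Only these three properties enter the computation. Throughout, $f\in L^1$ guarantees that all sphere integrals $\int f(|\xi-\eta|_p)\,d\mu(\eta)$ are finite, so every manipulation below is legitimate.

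Next I would evaluate $P_fh(\xi)$ pointwise, splitting into two cases.

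\emph{Case $\xi\notin B$.} Here $h(\xi)=0$. Moreover $|\xi-\eta|_p$ is constant for $\eta\in B$, by ultrametricity, so
\[
P_fh(\xi)=f(|\xi-a|_p)\int_B h\,d\mu=0 .
\]

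\emph{Case $\xi\in B_{\ell_0}$.} I would split the domain of integration into three pieces.
\begin{itemize}
\item On $\mathbb{Z}_p\setminus B$ we have $h(\eta)=0$, and this piece contributes
\[
-h(\xi)\sum_{k=0}^{r-1}f(p^{-k})\,\mu\{|\xi-\eta|_p=p^{-k}\}=-h(\xi)(1-p^{-1})\sum_{k=0}^{r-1}f(p^{-k})p^{-k}.
\]
\item On $B_{\ell_0}$ we have $h(\eta)=h(\xi)$, so the integrand vanishes identically.
\item On $B\setminus B_{\ell_0}$ we have $|\xi-\eta|_p=p^{-r}$. The integral of $h(\eta)$ over this set equals $-\int_{B_{\ell_0}}h=-p^{-r-1}h(\xi)$, by the mean-zero property. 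Subtracting $h(\xi)\,\mu(B\setminus B_{\ell_0})=h(\xi)(p^{-r}-p^{-r-1})$ gives a total contribution of $-f(p^{-r})p^{-r}h(\xi)$.
\end{itemize}
Adding up, we obtain $P_fh=-\lambda_r h$, with $\lambda_r$ exactly as in the statement. Transporting back with $\rho^{*-1}$ gives $D_f\psi_{rnj}^{(p)}=-\lambda_r\psi_{rnj}^{(p)}$.

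So the eigenvalue of $D_f$ is $-\lambda_r$; the statement's $\lambda_r$ should be read as the eigenvalue of $-D_f$, and I would add a remark to that effect. I expect the main obstacle to be the first step, namely justifying rigorously that the interval support and subintervals of a real wavelet are exactly the images of a disc and its subdiscs. This needs Lemma \ref{PropertiesMonna} applied with the correct indexing of $n$ and $r$, together with care about endpoints, which lie in the null set $N_p$. The rest is the routine ultrametric sphere-measure bookkeeping described above.
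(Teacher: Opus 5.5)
Your proof is correct and has the same skeleton as the paper's: conjugate $D_f$ by $\rho^*$ to the radial operator $P_f$ on $\mathbb{Z}_p$, then find the eigenvalue there. The difference is in how the two ingredients are obtained.

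The paper computes nothing on the $p$-adic side. It quotes Kozyrev's eigenvalue formula for radial kernels \cite[Theorem 3]{Kozyrev2004}. It also quotes the wavelet correspondence \cite[Theorem 7]{Kozyrev2002}, which is proved for $p=2$ and only asserted to extend to odd $p$.

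You replace both citations by direct arguments. You identify $\rho^*\psi_{rnj}^{(p)}$ from Lemma \ref{PropertiesMonna}: no carries occur in $\rho(a+p^r z)=\rho(a)+p^{-r}\rho(z)$, so a disc and its maximal subdiscs go to the corresponding interval and subintervals, for every $p$. You then prove the eigenvalue formula by the three-piece splitting of the integral.

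What each route buys:
\begin{itemize}
\item The citation route is shorter, and through Kozyrev's general theorem it covers radial kernels on all of $\mathbb{Q}_p$ at no extra cost.
\item Your route is self-contained and supplies the $p\neq 2$ case. It also shows that only three properties are used: support in a disc, constancy on its maximal subdiscs, and mean zero. Hence all such functions at level $r$ share one eigenspace, which is exactly what the breadthwise decomposition in Section 4 relies on.
\end{itemize}

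Your computation also exposes the sign. The expression quoted in the paper's proof is the eigenvalue of $-P_f$, and indeed $D_f\psi_{rnj}^{(p)}=-\lambda_r\psi_{rnj}^{(p)}$. This must be so, since $D_f$ is negative semidefinite for $f\ge 0$, and it is implicitly used in the factor $e^{-\lambda_r t}$ of (\ref{eq:time-evolution}). So your closing remark corrects the statement rather than weakening your proof. Likewise, your indexing (support length $p^{-r}$, $r\in\mathbb{N}$) is the one the formula needs, whereas (\ref{genwavelet}) as written gives support length $p^{r}$.

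One side claim is inaccurate. The hypothesis $f\in L^1([0,1],\lambda)$ does not control the values $f(p^{-k})$, which sit on a null set. So it does not make $\int f(|\xi-\eta|_p)\,d\mu(\eta)=(1-p^{-1})\sum_{k\ge0}p^{-k}f(p^{-k})$ finite. Your computation never needs this, though: the integrand vanishes on $B_{\ell_0}$, and otherwise only the finitely many values $f(p^{-k})$, $0\le k\le r$, enter.
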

\begin{proof}

In order to see that the generalised eigenvalues $\wlr$ are eigenvalues, use the following commutative diagram:
\[
\xymatrix{
L^2\left([0,1],\lambda\right) \ar[r]^{D_f} \ar[d]_{\rho^*}
        & L^2\left([0,1],\lambda\right) \\
L^2\left(\mathbb{Z}_p,\mu\right) \ar[r]_{P_f}
        & L^2\left(\mathbb{Z}_p,\mu\right)\ar[u]_{{\rho^*}^{-1}}]
}
\]
in which the vertical arrows express the isometric isomorphism between the $L^2$-spaces revealed in Remark \ref{isometry}, and
which defines the $p$-adic operator $P$ on $L^2(\mathbb{Z}_p,\mu)$. It is readily seen to be given by
\begin{equation} \label{eq:padicoperator}
P_fh(\xi)=\int_{\mathbb{Z}_p}f\!\left(\absolute{\xi-\eta}_p\right)(h(\eta)-h(\xi))\,d\mu(\eta)\,,
\end{equation}
where $\mu$ is the Haar measure on $\mathbb{Z}_p$, normalised to
$\mu(\mathbb{Z}_p)=1$, and $h\colon\mathbb{Z}_p\to\mathbb{C}$ is a $p$-adic $L^2$-function. The corresponding wavelet eigenvalue can be seen as follows: First of all, the Kozyrev wavelets  supported in
a disk $D_r(a)$ of radius $p^{-r}$  centred in $a\in\mathbb{Z}_p$ are eigenfunctions of the operator $P_f$, as its kernel function depends only on the $p$-adic distance, and the corresponding eigenvalue is given by
    \begin{align*} \label{eigenvalue}
    \lambda_{r}& = 
    \int _{\left| a - \eta \right|_p >p^{-r}} 
    f\!\left(\absolute{a-\eta}_p\right)d\eta + 
    p^{-r} f\!\left(\absolute{a-(a + p^r)}_p\right)
    \end{align*}
according to \cite[Theorem 3]{Kozyrev2004}. The last summand simplifies to
$p^{-r}f(p^{-r})$, and the integral becomes a sum over integrals supported in circles centred in $a\in\mathbb{Z}$. This proves the asserted value for $\lambda_r$, and
this indeed proves the assertion, because the eigenvalues of $P_f$ and $D_f$ coincide due to the correspondence between
generalised wavelets and  Kozyrev wavelets   by \cite[Theorem 7]{Kozyrev2002}. Notice that this correspondence also holds true for $p\neq 2$, if Haar wavelets are replaced by (generalised) wavelets on $[0,1]$.   
\end{proof}
}

\begin{rem}
The isometric isometry between $L^2(\mathbb{R}_+,\lambda)$ and $L^2(\mathbb{Q}_p,\mu)$ is stated almost explicitly in \cite[Theorem 7.2]{Wilson2026} as an isometric isomorphism between $L^2(\mathbb{Z}_p,\mu)$ and $L^2([0,1],\lambda)$, but is in our viewpoint already covered by the results by Monna himself in \cite[Page 8]{Monna1952}, and possibly together with Kozyrev's wavelet-wavelet correspondence \cite[Theorem 7]{Kozyrev2002}. This correspondence becomes important in what now follows about the carrying over of a $p$-adic Markov process onto  the real interval $[0,1]$ through it.
\end{rem}

We now delve into the relationship between the (strong) Markov process generated by the operator \(D_f\) (Theorem \ref{teo:2.1}) and the Markov process attached to the operator \(P_f\) which is guaranteed to exists by Theorem \(2\) of \cite{Ledezma-Energy}. Nevertheless, the proof of the next Theorem does not rely on the existence of the process attached to \(P_f\), but rather, it appears naturally as the pull-back via the Monna map of the random process generated by \(D_f\). Moreover, the probability transitions of each process are related naturally by the Monna map as expected, as well as the respective semigroups.

\begin{teo}
    Let \(X_t\in [0,1]\) be the strong Markov process attached to the infinitesimal generator \(D_f\) with probability transition \(p_t(x,\cdot)\). Let \(P_f\) be the operator defined on \(L^2(\mathbb{Z}_p,\mu)\) defined by equation (\ref{eq:padicoperator}). Let \(X^{*}_t:=\rho^{-1}(X_t)\) be the pull-back of \(X_t\). 
    Then the following holds
    \begin{enumerate}
        \item The attached semigroups are unitarily equivalent:
        \begin{equation*}
            e^{tD_f}=\rho^{*-1} e^{tP_f}\rho^{*}
        \end{equation*}
        \item The pull-back \(X^{*}_t\in \mathbb{Z}_p\) is a strong Markov process on \(\mathbb{Z}_p\) with probability transition \(p_t^{*}(y,\cdot)\) satisfying
        \[p_t^{*}(x,B)=p_t(\rho(x),\rho(B)),\]for any Borel set \(B\subset \mathbb{Z}_p\).
    \end{enumerate}
\end{teo}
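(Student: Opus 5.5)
The plan is to first establish the operator identity $D_f=\rho^{*-1}P_f\rho^{*}$ on the level of generators, and then exponentiate. Fix $u\in L^2([0,1],\lambda)$ (or $C[0,1]$) and $\xi\in\mathbb{Z}_p\setminus N_p$, and put $x=\rho(\xi)$. Substituting $y=\rho(\eta)$ in the integral defining $D_fu(x)$ and using Lemma \ref{Haar2Lebesgue}, the Lebesgue integral over $[0,1]$ becomes a Haar integral over $\mathbb{Z}_p$:
\[
D_fu(\rho(\xi))=\int_{\mathbb{Z}_p}f\!\left(\absolute{\xi-\eta}_p\right)\bigl(u(\rho(\eta))-u(\rho(\xi))\bigr)\,d\mu(\eta)=P_f(\rho^*u)(\xi)\,.
\]
Lemma \ref{PropertiesMonna} says that $\rho$ is a bijection off the set $N_p$. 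This set is countable, so it is null on both sides, and the identity $\rho^*D_f=P_f\rho^*$ therefore holds almost everywhere. This is exactly the commutative diagram already used in the proof of Theorem \ref{teo 4.1}.

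Next, I would note that $D_f$ and $P_f$ are bounded operators, for $f$ bounded, by the estimate of Lemma \ref{boundedOp}. Their exponentials are then given by norm-convergent power series. Because $\rho^*$ is unitary by Remark \ref{isometry}, we have $(\rho^{*-1}P_f\rho^*)^n=\rho^{*-1}P_f^n\rho^*$ for every $n$. Summing the series gives $e^{tD_f}=\rho^{*-1}e^{tP_f}\rho^*$, which is item 1. If $f$ is only integrable, I would instead argue through the resolvents: the conjugation identity passes to $(\lambda-D_f)^{-1}$, and then to the semigroup via the Hille--Yosida exponential formula.

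For item 2, the key is that $\rho$ restricted to $\mathbb{Z}_p$ is continuous (Lemma 2.1) and bimeasurable (Lemma \ref{Haar2Lebesgue}), and it is a bijection off a countable set. I would therefore define $p_t^*(\xi,B):=p_t(\rho(\xi),\rho(B))$. This is a probability measure in $B$, because $\rho$ maps disjoint Borel sets to sets whose intersections lie in the countable set $\rho(N_p)$. The task is to show that these intersections are $p_t$-null, and this is the main obstacle: transition measures of a jump process could in principle charge points, including the countable set $\rho(N_p\cap\mathbb{Z}_p)$.

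My first attempt at this obstacle is to write $e^{tD_f}=e^{-tc(x)}\sum_n t^n K^n/n!$, where $K$ is the integral part of $D_f$ and $c(x)=\int_0^1 k(x,y)\,dy$. This expresses $p_t(x,\cdot)$ as $e^{-tc(x)}\delta_x$ plus an absolutely continuous part. The absolutely continuous part gives no mass to countable sets. The atom sits at $x$ itself, and $x=\rho(\xi)$ has a unique preimage in the relevant cases, so the atom is consistent with the pullback. Alternatively, I could simply redefine $X^*$ on the null exceptional set.

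With $p_t^*$ defined, the Chapman--Kolmogorov equations for $p_t^*$ follow from those of $p_t$ by the same change of variables $y=\rho(\eta)$. Measurability in $\xi$ follows from the continuity of $\rho$. Finally, the strong Markov property transfers because $X^*_t=\rho^{-1}(X_t)$ generates the same filtration as $X_t$ up to null sets. A stopping time for one process is therefore a stopping time for the other, and the conditional laws correspond under $\rho$.
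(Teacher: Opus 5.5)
Your proposal is correct. For item 1 it is the paper's argument: conjugate powers, $D_f^n=\rho^{*-1}P_f^n\rho^*$, and sum the exponential series of bounded operators. Your change of variables $y=\rho(\eta)$ only makes explicit what the paper builds into the definition of $P_f$ through the diagram in the proof of Theorem~\ref{teo 4.1}. The resolvent fallback is unnecessary, since $D_f$ is already bounded whenever $f(\absolute{\cdot}_p)\in L^1(\mathbb{Z}_p,\mu)$, by Young's inequality.

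For item 2 your route is more careful than the paper's. The paper notes only that $\sigma(X_t)=\sigma(X_t^*)$, which is also your strong Markov argument. It then writes $\mathbb{P}(X_t^*\in B\mid X_s^*=x)=\mathbb{P}(X_t\in\rho(B)\mid X_s=\rho(x))$. This tacitly uses $\{\rho^{-1}(X_t)\in B\}=\{X_t\in\rho(B)\}$, which fails when $X_t$ is a point with two Monna preimages and only the one not selected by $\rho^{-1}$ lies in $B$. Your splitting $p_t(x,\cdot)=e^{-tc}\delta_x+(\text{absolutely continuous part})$ shows this event is null unless $\rho(\xi)$ is itself such a double point. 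So you genuinely prove item 2, including Chapman--Kolmogorov, for every $\xi$ outside a countable set.

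Two corrections to how you finish.
\begin{itemize}
\item Say plainly that at the exceptional $\xi$ the identity is false, not merely unproved, so redefining $X^*$ cannot rescue it. If $\rho(\xi)=\rho(\xi')=x$ with $\xi\neq\xi'$, then $\rho(\{\xi\})=\{x\}$ and $\rho(\mathbb{Z}_p\setminus\{\xi\})=[0,1]$. Hence $B\mapsto p_t(x,\rho(B))$ assigns total mass $e^{-tc}+1>1$ to a partition of $\mathbb{Z}_p$ and is not a measure. The statement should therefore be read as holding for $\xi$ off a countable, $\mu$-null set.
\item The factorisation $e^{tD_f}=e^{-tc}\sum_n t^nK^n/n!$ requires multiplication by $c$ to commute with $K$. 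This holds because $c(x)=\int_{\mathbb{Z}_p}f(\absolute{\eta}_p)\,d\mu(\eta)$ is constant by translation invariance of Haar measure, so write $c$ rather than $c(x)$. For a non-constant $c$ you would need a Duhamel expansion instead, which gives the same atom-plus-absolutely-continuous splitting.
\end{itemize}
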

\begin{proof}
    Since the Monna map \(\rho:\mathbb{Q}_p\rightarrow\mathbb{R}_+\) is an injective measurable with inverse failing to exists in a set of measure zero, the generated \(\sigma\)-algebras \(\sigma(X_t)\) and \(\sigma(X^*_t)\) coincide, therefore the process \(X_t^{*}\) is a strong Markov process. Since \(D_f\) and \(P_f\) are bounded operators, and \(D_f^n=\rho^{*-1}P_f^n\rho^{*}\), the exponential formula \(e^{tD_f}=\sum_n \frac{t^n}{n!}D_f^n\), implies
    \[e^{tD_f}=\rho^{*-1} e^{tP_f}\rho^{*}.\]
    Consequently, for an arbitrary Borel set \(B\subset \mathbb{Z}_p\) the following holds true:
    \begin{equation*}
        \begin{split}
            p_t^*(x,B)&=\mathbb{P}(X_t^{*}\in B \,| \,X_s^*=x)\\
            &=\mathbb{P}(X_t\in \rho(B) \,| \,X_s=\rho(x))\\
            &=p_t(\rho(x),\rho(B))\,.
        \end{split}
    \end{equation*}
    This proves the assertions.
\end{proof}

\subsection{Finite approximations of $p$-adic diffusion on $[0,1]$}

The task here is to approximate solutions to the Cauchy problem (\ref{CauchyProblem}) defined on the interval $[0,1]$, by using
 related Cauchy problems on  discretisations of $[0,1]$. The relationship between the solutions of the discretised Cauchy problems and that of the original problem is established in Theorem \ref{teo:6.1} below. In order to discretise the interval $[0,1]$, we choose regular partitions
for each $n\in \mathbb{N}$,
given by
\begin{equation}\label{partition}
    A_n=\left\{\left[\frac{i}{n},\frac{i+1}{n}\right]\right\}_{i=0}^{n-1}.
\end{equation}
We specialize to regular refinements of partitions with growing $n$, that means if $m < n$, then $m \mid n$. 
For this, notice that for each pair $x,y\in [0,1]$,
there exists a unique sequence of sub-intervals
$I_n(x)\in A_n$ and $J_n(y)\in A_n$
such that 
\begin{equation*}
    \bigcap_{n=0}^{\infty}I_n(x)=\{x\}
    \hspace{0.4cm} \text{ and } \hspace{0.4cm} 
    \bigcap_{n=0}^{\infty}I_n(y)=\{y\}.
\end{equation*}
This allows us to identify each point in the unit interval
with a sequence of intervals $I_n$.
For a given $x\in[0,1]$,
the sequence of intervals $I(x)=(I_n(x))_{n\in\mathbb{N}}$
will be called the \emph{path} attached to $x$.

For the kernel function $w\in L^{\infty}([0,1]\times[0,1])$ that defines the diffusion operator, cf.\  (\ref{OurOperator})
\begin{equation*}
    C[0,1]\ni u(x)\mapsto Wu(x)=\int_{[0,1]} w(x,y)(u(y)-u(x))dy\,,
\end{equation*}
we now define transition rates $w_{I_n(x),J_n(y)}$ via the kernel function $w$.
There are two suitable ways to obtain such a discretisation of $w$. We develop the theory for both of these, using either averaging or a so-called sample sequence  $\mathcal{S}=\{x_I\}_{I\in A_n, n\in \mathbb{N}}$ which will be fixed throughout the paper.
\begin{defi}
    Let $A_n$ as in (\ref{partition}) be the $n$-regular partition of the unit interval $[0,1]$. For a given kernel function $w\in L^1([0,1]\times[0,1])$, define the average transition-rate matrix $W_n^a$ attached to the pair $(A_n,w) $ as
\[ 
w^{(n),a}_{I,J}:=[W_n^a]_{I,J}= \frac{1}{n^{2}}\int_{I\times J}w(x,y) dxdy,
\]
for $I,J\in A_n$.
The function defined by 
\[w_n^a(x,y):=\sum_{I,J\in A_n}w_{I,J}^{(n),a}\chi_{I}(x)\chi_J(y),\]
is called the \emph{$n$-th step average kernel} attached to the pair $(A_n,w)$. \newline

\end{defi}

\begin{prop}
    The entries of the average-transition-rate matrix $W_n^a$ attached to $(A_n,w)$ satisfy 
\begin{equation}
    \label{eqn:2}
    w(x,y)=\lim_{n\rightarrow \infty} w_{I_n(x),J_n(y)}^{(n),a}
\end{equation}
almost everywhere. 
\end{prop}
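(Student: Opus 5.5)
The plan is to read the statement as a two-dimensional Lebesgue differentiation theorem. First a normalisation remark: each cell $I\times J$ with $I,J\in A_n$ has Lebesgue measure $\lambda^2(I\times J)=n^{-2}$. So the quantity that should converge to $w(x,y)$ is the \emph{mean value}
\[
\frac{1}{\lambda^2(I\times J)}\int_{I\times J}w(x,y)\,dx\,dy \;=\; n^{2}\int_{I\times J}w(x,y)\,dx\,dy .
\]
With the literal prefactor $n^{-2}$ the right-hand side of (\ref{eqn:2}) is bounded by $n^{-2}\|w\|_{L^1}\to0$, so (\ref{eqn:2}) can only hold if the prefactor is read as $n^{2}$. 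I will therefore prove the claim for $w^{(n),a}_{I,J}$ interpreted as the average of $w$ over $I\times J$; this is what the name \emph{average transition-rate matrix} indicates.

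The first step removes the ambiguity in the choice of $I_n(x)$, $J_n(y)$. Let $E$ be the set of $(x,y)$ such that $x$ or $y$ is an endpoint $i/n$ for some $n$ in the chosen refining sequence. This $E$ is a countable union of lines, hence $\lambda^2$-null. Off $E$, the cells $I_n(x)$ and $J_n(y)$ are uniquely determined, nested (because $m\mid n$ for $m<n$), and $Q_n(x,y):=I_n(x)\times J_n(y)$ is a closed square of side $1/n$ containing $(x,y)$.

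The second step is the main one. Since $w\in L^1([0,1]^2)$ (it is in $L^\infty$, hence integrable on the unit square), almost every point $z=(x,y)$ is a Lebesgue point:
\[
\lim_{r\to0}\frac{1}{\lambda^2(B_r(z))}\int_{B_r(z)}\absolute{w-w(z)}\,d\lambda^2=0 .
\]
Take $z\notin E$ a Lebesgue point. Then $Q_n(z)\subset B_{r_n}(z)$ with $r_n=\sqrt2/n$, and
\[
\lambda^2(Q_n(z))=n^{-2}=\frac{1}{2\pi}\,\lambda^2\bigl(B_{r_n}(z)\bigr),
\]
so the squares shrink nicely (regularly) to $z$. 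Hence
\[
\absolute{\frac{1}{\lambda^2(Q_n)}\int_{Q_n}w\,d\lambda^2-w(z)}
\le \frac{2\pi}{\lambda^2(B_{r_n}(z))}\int_{B_{r_n}(z)}\absolute{w-w(z)}\,d\lambda^2\longrightarrow0 .
\]
This is exactly (\ref{eqn:2}) at $z$, and the exceptional set (non-Lebesgue points together with $E$) is null.

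I expect the main obstacle to be the normalisation issue rather than the analysis. Once the averaging constant is fixed, the argument is a direct application of Lebesgue's differentiation theorem for regular families, and only the bounded eccentricity of the non-centred squares needs checking. An alternative route avoids Lebesgue points. Because the refinements are nested, the averages $w^a_n$ are the conditional expectations $\mathbb{E}[w\mid\mathcal{F}_n]$, where $\mathcal{F}_n$ is the $\sigma$-algebra generated by $A_n\times A_n$. This is a martingale, and the $\mathcal{F}_n$ generate the Borel $\sigma$-algebra since the mesh tends to $0$. Lévy's upward (martingale convergence) theorem then gives a.e.\ convergence of $w^a_n$ to $w$.
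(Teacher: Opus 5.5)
Your proof is correct and takes the same route as the paper. The paper simply invokes the Lebesgue differentiation theorem for the shrinking cells $I_n(x_0)\times J_n(y_0)$; you additionally check the bounded-eccentricity condition that makes the non-centred squares admissible. Your normalisation remark is also right: since $\lambda^2(I\times J)=n^{-2}$, the prefactor in $w^{(n),a}_{I,J}$ must be $n^{2}$ rather than $\frac{1}{n^{2}}$ for the limit to be $w(x,y)$, and the paper's proof carries the same slip without comment.
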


\begin{proof}
    For almost all $(x_0,y_0)\in [0,1]\times [0,1]$ the paths $I(x_0)$ and $J(y_0)$ satisfy $I_n(x_0)\rightarrow \{x_0\}$ and $J_n(y_0)\rightarrow\{y_0\}$. Therefore, using the Lebesgue Differentiation Theorem, it holds true that 
    \[  \lim_{n\rightarrow \infty} \frac{1}{n^2}\int_{I_n\times J_n}w(x,y) dxdy =w(x_0,y_0),
    \]
as asserted.
\end{proof}
We now introduce the continuous version of the matrix $W_n^a$ by  extending it  to an operator on the space $C[0,1]$ via the $n$-th step functions in the following way: Let $w_n^a$ be the $n$-th step average kernel. Then the assignment
\[
C[0,1]\ni u(x)\mapsto \mathcal{W}_n^au(x):=\int_{0}^1\left\{w_n^a(x,y)u(y)-w_n^a(y,x)u(x)\right\}dy
\]
is a well-defined linear bounded operator on $C[0,1]$. Notice, that again, this is a well-defined operator acting on the bigger space $L^{\infty}[0,1]$. 

\begin{rem}
Subsequently we write $w_n^a(x,y)$, $W_n^a$ and $\mathcal{W}_n^a$ all as $w_n(x,y)$, $W_n$ and $\mathcal{W}_n$, respectively, in order to relax the notation.
\end{rem}

Define
\[
X_n=\bigoplus\limits_{I\in A_n}\mathbb{C}\chi_I
\]
which is a finite-dimensional vector space.

\begin {defi}
\label{defi 6.2}
Let $X_{\infty}$ be the closure of $\mathcal{E}[0,1]$ in $L^{\infty}([0,1])$.  The projection operator will be defined in two ways. First,
\[
\mathbf{P}_n: X_{\infty}\rightarrow X_n
\]
is defined by
\[
X_{\infty}\ni \varphi\rightarrow \mathbf{P}_n(\varphi)(x)= \sum_{I\in A_n} \varphi_I^a\chi_{I}(x),
\]
where $\varphi_I^a$ is the averaged value of $\varphi(x)$ on $I \in A_n$. The embedding operator 
\[
\mathbf{E}_n:X_n\rightarrow X_{\infty}
\]
is defined as the identity map  $X_{\infty}\to X_\infty$ restricted to $X_n$.
\end{defi}

\begin{prop}
    Let $\mathfrak{M}_n$ be the Markov 
    process 
    attached to $W_n$. Then for given two states $I^{(n)},J^{(n)}\in A_n$  the transition probability of $\mathfrak{M}_n$ is given by the solution of the following Cauchy problem for $u(\cdot,t)\in X_n$, and $u(x,\cdot)\in C^1[0,\infty)$:
    \begin{equation}
\label{eqn:5}
    \begin{cases}
      \frac{\partial u}{\partial t} (x,t)=\mathcal{W}_{n}u(x,t)\\
      u(x,0)=\textbf{P}_nu_0(x)\in X_n
    \end{cases}\,.
\end{equation}
\end{prop}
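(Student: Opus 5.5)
The plan is to reduce the statement to the standard finite-state theory of continuous-time Markov chains. The first step identifies $X_n$ with $\mathbb{C}^{A_n}$ via $u=\sum_{I\in A_n}u_I\chi_I$ and computes $\mathcal{W}_n$ on $X_n$. Since $w_n(x,y)=\sum_{I,J}w^{(n)}_{I,J}\chi_I(x)\chi_J(y)$ and $\lambda(J)=1/n$, for $x\in I$ we get
\[
\mathcal{W}_nu(x)=\frac1n\sum_{J\in A_n}\left(w^{(n)}_{I,J}u_J-w^{(n)}_{J,I}u_I\right).
\]
Hence $\mathcal{W}_n$ maps $X_n$ into itself and acts as the matrix $Q_n=\frac1n\left(W_n-\operatorname{diag}(\sum_J w^{(n)}_{J,I})\right)$. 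When $w$ is symmetric, as for the kernel $k(x,y)$ coming from the $p$-adic distance, $W_n$ is symmetric, and $Q_n$ is a $Q$-matrix: the off-diagonal entries are non-negative and the rows sum to zero. This is the generator of the chain $\mathfrak{M}_n$ in the sense of the master equation (\ref{eqn:1}), with jump rates $w^{(n)}_{I,J}/n$; the constant factor is only a time normalisation, which I will fix by declaring $\mathfrak{M}_n$ to be the chain with generator $\mathcal{W}_n|_{X_n}$.

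Next, because $X_n$ is finite-dimensional and $\mathcal{W}_n$ is bounded (Lemma \ref{boundedOp}), the Cauchy problem (\ref{eqn:5}) is a linear ODE system $\dot u=Q_nu$ with $u(0)=\mathbf{P}_nu_0$. Its unique solution is $u(t)=e^{tQ_n}\mathbf{P}_nu_0$, which by uniqueness of solutions of linear ODEs stays in $X_n$. Then I invoke the standard fact, paralleling the Feller semigroup argument in Theorem \ref{teo:2.1}, that for a $Q$-matrix the matrix $P_n(t)=e^{tQ_n}$ is stochastic and equals the transition matrix of the chain: $[P_n(t)]_{I,J}=\mathbb{P}(\mathfrak{M}_n(t)=J\mid\mathfrak{M}_n(0)=I)$. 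This follows from the Kolmogorov backward equation $P_n'=Q_nP_n$ with $P_n(0)=\mathrm{Id}$.

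To finish, I take the initial datum $u_0=\chi_J$, so that $\mathbf{P}_nu_0=\chi_J$, since averaging over the intervals of $A_n$ preserves their indicators up to a null set. The solution evaluated at $x\in I$ is then $[e^{tQ_n}]_{I,J}$, the transition probability from $I$ to $J$. For general data, $u(x,t)=\sum_J [P_n(t)]_{I,J}(\mathbf{P}_nu_0)_J$ for $x\in I$, the discrete analogue of $\int u_0\,p_t(x,dy)$.

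The main obstacle is bookkeeping rather than analysis. One has to match the scaling and index conventions: the factor $1/n$ from integrating step functions, the $1/n^2$ in $W_n^a$, and the choice between $w_{I,J}$ and $w_{J,I}$ in the master equation versus the backward equation. With symmetric $w$ both orderings coincide, which I would state explicitly.
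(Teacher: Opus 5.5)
Your proposal is correct and follows the paper's route: compute the matrix of $\mathcal{W}_n$ on $X_n$ and recognise the Cauchy problem as the evolution equation of the finite chain. The paper reads it directly as the forward master equation (\ref{eqn:1}), which needs no symmetry, whereas you read it as a backward equation for a $Q$-matrix, so you need symmetric, non-negative $w$, as you note. Your explicit $1/n$ rate factor is more accurate than the paper's own computation, which drops powers of $n$.
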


\begin{proof}
    For this we only have to show that the matrix representation of $W_n$ in the finite-dimensional space $X_n$ 
    gives rise to the Master Equation (\ref{eqn:1}). This is clear by the following computation: Let \[
    u(x)=\frac{1}{n}\sum_{J\in A_n}f_J \chi_{J}(x)\in X_n\,,
    \]
    then 
\[
W_nu(x)=\sum_{J\in A_n}f_J\frac{1}{n}\int_{J}w_n(x,y)dy-\frac{1}{n}\sum_{J}f_J1_{J}(x)\sum_{K\in A_n}\int_{K}w_n(y,x)dy\,,
\]
whence for a given basis element $\frac{1}{n}\chi_{I}(x)$ for $I\in A_n$, we have 
\begin{equation*}
    \left\langle\frac{1}{n}1_{I}(x),W_nu(x)\right\rangle_{L^2([0,1])}= \sum_{J}\left\{f_Jw^{(n)}_{I,J}-f_Iw^{(n)}_{J,I}\right\}\,.
\end{equation*}
Hence (\ref{eqn:5}) is equivalent to (\ref{eqn:1}). Therefore, the transition probability attached to  (\ref{eqn:5}) is equal to the transition probability attached to the Markov chain $\mathfrak{M}_n$.   \end{proof}

Below, we will see that the solutions of the Cauchy problems attached to the matrices $W_n$ will converge uniformly to the solution of the one attached to $\mathcal{W}$. 
This result extends the results of \cite{PLC2021}. 
In order to prove this result, we introduce some preliminary definitions. Denote by $\mathcal{E}[0,1]$ the space of step functions equipped with the norm $||\cdot||_{\infty}$. It is a linear subspace of $L^{\infty}[0,1]$.

\label{teo:6.1}
\begin{teo}
Let $w\in L^{\infty}([0,1]\times[0,1])$ and
$w_n=w_n^a$ such that
$w_n\rightarrow w$ with respect to $|| \cdot ||_\infty$. If $u(x,t)$ is the solution of  
(\ref{CauchyProblem}) with initial condition $u_0(x)\in C[0,1]$,  and  $u_n(x,t)$ are  the solutions of (\ref{eqn:5}) with initial conditions $\textbf{P}_nu_0(x)$ for $n\in\mathbb{N}$, then the following  holds true:
\[\lim_{n\rightarrow \infty} \sup_{T \geq t\geq 0} ||u_n(x,t)-u(x,t)||_{\infty}=0. \]
\end{teo}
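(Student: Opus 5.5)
The plan is to treat both evolutions as exponentials of bounded operators on the Banach space $L^\infty[0,1]$, which contains both $C[0,1]$ and $X_n$. By Lemma \ref{boundedOp} and the remark after it, $\mathcal{W}$ and $\mathcal{W}_n$ are bounded there. So $u(\cdot,t)=e^{t\mathcal{W}}u_0$ and $u_n(\cdot,t)=e^{t\mathcal{W}_n}\mathbf{P}_nu_0$. By Theorem \ref{teo:2.1}, the first formula agrees with the Feller semigroup solution, and the second is the matrix exponential solution of (\ref{eqn:5}). We then split
\[
u_n(t)-u(t)=e^{t\mathcal{W}_n}\bigl(\mathbf{P}_nu_0-u_0\bigr)+\bigl(e^{t\mathcal{W}_n}-e^{t\mathcal{W}}\bigr)u_0
\]
and bound each term uniformly for $t\in[0,T]$.

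\textbf{First term.} We need a uniform bound on $\|e^{t\mathcal{W}_n}\|$. The crude estimate $\|\mathcal{W}_n\|_{L^\infty\to L^\infty}\le 2\|w_n\|_\infty$ suffices. Since $w_n\to w$ in $\|\cdot\|_\infty$, we have $\sup_n\|w_n\|_\infty=:M<\infty$, and therefore $\|e^{t\mathcal{W}_n}\|\le e^{2MT}$ on $[0,T]$. (The better bound $\le 1$ would follow from the positive maximum principle, using that $w_n\ge0$ is symmetric when $w$ is, but it is not needed.) It remains to show $\|\mathbf{P}_nu_0-u_0\|_\infty\to0$. This holds because $u_0$ is uniformly continuous on $[0,1]$: on each $I\in A_n$ the average $\varphi_I^a$ differs from $u_0(x)$ by at most the modulus of continuity $\omega_{u_0}(1/n)$.

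\textbf{Second term.} We use the Duhamel identity
\[
e^{t\mathcal{W}_n}-e^{t\mathcal{W}}=\int_0^t e^{(t-s)\mathcal{W}_n}(\mathcal{W}_n-\mathcal{W})e^{s\mathcal{W}}\,ds.
\]
This is valid for bounded generators, since differentiating $s\mapsto e^{(t-s)\mathcal{W}_n}e^{s\mathcal{W}}$ is legitimate in operator norm. It yields
\[
\sup_{0\le t\le T}\|(e^{t\mathcal{W}_n}-e^{t\mathcal{W}})u_0\|_\infty\le T e^{2MT}e^{2\|w\|_\infty T}\,\|\mathcal{W}_n-\mathcal{W}\|\,\|u_0\|_\infty .
\]
Next we estimate $\|\mathcal{W}_n-\mathcal{W}\|$ directly from the integral forms. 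For $\varphi\in L^\infty$,
\[
|(\mathcal{W}_n-\mathcal{W})\varphi(x)|\le\int_0^1|w_n(x,y)-w(x,y)||\varphi(y)|\,dy+|\varphi(x)|\int_0^1|w_n(y,x)-w(x,y)|\,dy .
\]
Using the symmetry $w(x,y)=w(y,x)$ in the second integral, this is at most $2\|w_n-w\|_\infty\|\varphi\|_\infty$. So the operator norm tends to $0$ by hypothesis, and combining the two terms gives the claim.

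\textbf{Main obstacle.} The delicate point is bookkeeping rather than estimates. One must make sure that the solution of (\ref{CauchyProblem}), understood in $C[0,1]$ via Theorem \ref{teo:2.1}, coincides with $e^{t\mathcal{W}}u_0$ computed in $L^\infty$. One must also make sure that the solution of (\ref{eqn:5}) in $X_n$ is $e^{t\mathcal{W}_n}\mathbf{P}_nu_0$, which requires $\mathcal{W}_n X_n\subset X_n$. Because $w_n$ is constant on the cells $I\times J$, $\mathcal{W}_n\varphi$ is a step function on $A_n$, so $X_n$ is invariant, and uniqueness for bounded linear ODEs in Banach spaces identifies both solutions. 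I would also note that $\|\cdot\|_\infty$ here means the essential sup, so the supremum in the statement is over the $L^\infty$-norm at each $t$. Finally, the hypothesis that $w_n\to w$ uniformly is strong: for the average kernel it essentially forces $w$ to be continuous, but we simply use it as given.
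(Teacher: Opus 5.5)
Your proof is correct and follows the paper's route. Both split $u_n(t)-u(t)$ into an initial-data error and a semigroup-difference term, handle the latter by the variation-of-parameters (Duhamel) formula, and conclude from $\|\mathcal{W}_n-\mathcal{W}\|_{\mathrm{op}}\to 0$ together with uniform bounds on both semigroups over $[0,T]$. You only differ in using the mirror-image split (letting $e^{t\mathcal{W}_n}$ act on $\mathbf{P}_nu_0-u_0$ and the semigroup difference act on $u_0$), and in spelling out what the paper merely asserts: the estimate $\|\mathcal{W}_n-\mathcal{W}\|\le 2\|w_n-w\|_\infty$ via symmetry of $w$, the uniform bound $e^{2MT}$, and the invariance $\mathcal{W}_nX_n\subset X_n$.

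One correction to your closing aside. Uniform convergence of the grid averages does not essentially force $w$ to be continuous. The paper's own kernel $f(|\rho^{-1}(x)-\rho^{-1}(y)|_p)$ is the uniform limit of its averages over the $p^k$-grids whenever $\lim_{j\to\infty}f(p^{-j})$ exists, even though it is in general discontinuous along the grid lines $x=a/p^k$.
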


\begin{proof} 
The spatial variable is suppressed in the remainder, for notational convenience. By assumption $w_n$ converges to $w$ with respect to $|| \cdot ||_\infty$, whence
\[
\|\mathcal{W}_n - \mathcal{W}\|_{\mathrm{op}} \rightarrow 0 , \qquad n\to\infty,
\]
where $\|\cdot\|_{\mathrm{op}}$ denotes the usual operator norm. \\
We define the expression
\[
g(t) := u_n(t) - u(t)
   = T_n(t)\mathbf{P}_n u_0 - T(t)u_0,
\]
where $T_n(t)=e^{t\mathcal{W}_n}$ and $T(t)=e^{t\mathcal{W}}$ are the semigroups solving the Cauchy problems 
\eqref{CauchyProblem} and \eqref{eqn:5}.  
Define also
\[
\tilde{u}_0 := \mathbf{P}_n u_0.
\]
Then
\[
g(t) 
 = T_n(t)\tilde{u}_0 - T(t)\tilde{u}_0
   + T(t)\tilde{u}_0 - T(t)u_0
 = \tilde{g}(t) + T(t)(\tilde{u}_0 - u_0),
\]
where
\[
\tilde{g}(t)
 := (T_n(t) - T(t))\tilde{u}_0.
\]
Since $\tilde{u}_0 - u_0 =\mathbf{P}_n u_0 - u_0$ converges  in $C[0,T]$ with respect to $\|\cdot\|_\infty$ and $T(t)$ is bounded on $[0,T]$, the only contribution that remains to be estimated is $\tilde{g}(t)$.
We use semigroup theory to obtain an estimate for $\sup_{T\geq t \geq 0}||\tilde{g}(t)||_\infty $.
Differentiating,
\[
\tilde{g}'(t)
 = \mathcal{W}_n T_n(t)\tilde{u}_0 - \mathcal{W} T(t)\tilde{u}_0
 = \mathcal{W}_n \tilde{g}(t) + (\mathcal{W}_n - \mathcal{W})T(t)\tilde{u}_0.
\]
Since $T_n(0)=T(0)=I$, we have $\tilde{g}(0)=0$.  
Thus $\tilde{g}$ satisfies the linear differential equation
\[
\begin{cases}
\tilde{g}'(t) = \mathcal{W}_n\tilde{g}(t) + (\mathcal{W}_n - \mathcal{W})T(t)\tilde{u}_0,\\[2mm]
\tilde{g}(0) = 0.
\end{cases}
\]

By variation of parameters,
\begin{align*}
\tilde{g}(t)&
 = e^{t\mathcal{W}}\tilde{g}_0+ \int_{0}^te^{(t-s)\mathcal{W}_n}(\mathcal{W}_n-\mathcal{W})T(s)\tilde{u}_0\,ds
 \\
&=\int_0^tT_n(t-s)(\mathcal{W}_n-\mathcal{W})T(s)\tilde{u}_0\,ds.
\end{align*}
Hence,
\[
\sup_{0\le t \le T} \|\tilde{g}(t)\|_\infty
 \le \int_{0}^{T} 
   \|T_n(t-s)\|_{\mathrm{op}}
   \,\|\mathcal{W}_n - \mathcal{W}\|_{\mathrm{op}}
   \,\|T(s)\tilde{u}_0\|_\infty
\, ds.
\]
Both semigroups are uniformly bounded on $[0,T]$, and $\|\mathcal{W}_n-\mathcal{W}\|_{\mathrm{op}}\to 0$.
Combining the both above estimations,
\[
\sup_{0\le t\le T}\|u_n(t) - u(t)\|_\infty
 \le 
 \sup_{0\le t\le T}\|\tilde{g}(t)\|_\infty
 + \sup_{0\le t\le T}\|T(t)\|_{\mathrm{op}}\,
    \|\tilde{u}_0 - u_0\|_\infty.
\]
Therefore, $u_n$ converges uniformly to $u$ on $[0,T]$, completing the proof.
\end{proof}

\section{Vizualizing Diffusion processes 
on the $p$-adic integers}



The theory developed above allows us to implement the following
algorithm to numerically solve differential equations,
given by certain bounded integral operators.
These must be defined by specifying a radially symmetric
kernel and act on ultrametric spaces,
equivalent to the boundary of a $p$-adic tree.
This boundary serves as the domain of integration
and is approximated by the equivalence classes
given by a fixed depth $d$,
i.e. $\mathbb{Z}_p / p^d \mathbb{Z}_p$.
On a high level,
the algorithm is described as such:

\begin{verbatim}
INPUT: `p` a prime, `h` a natural number.
    The initial value 'f: dT -> X' and Operator D.

1) Decompose the function 'f' into the waveletbasis of 'T'.
2) Solve the eigenvalue problem associated to D.
3) Compute the evolution in time using the heat kernels.
4) Recombine the evolved components.
\end{verbatim}

Using the Monna isometry,
we can identify the leaves of a $p$-adic tree
with the intervals given by an iterative $p$-partition
of the unit interval up to as set of measure zero.
Thus the procedure calculates the time evolution
of a function with $p$-adic domain
under the given integral operator.
Isometrically equivalently,
this is approximately the time evolution
of a compactly supported function
with real domain.
For brevity denote $\mathbb{Z}_p / p^d
    := \mathbb{Z}_p / p^d \mathbb{Z}_p$.

\subsection{Breadthwise Decomposition}
Interestingly enough,
a full wavelet decomposition is too much
computational effort
-- as during Step {\tt 3)}\ in the algorithm above, all wavelets with
equally sized supports are scaled by the same factor,
since the operator $D$ is given by a radially symmetric kernel.
So it suffices to find components within the
\emph{breadthwise decomposition}
\[
L^2(\mathbb{Z}_p / p^d)
    = E_0^D \oplus E_{\lambda_1}^D \oplus \dots \oplus E_{\lambda_d}^D\,,
\]
where $0=:\lambda_{-1}, \lambda_0,\dots,\lambda_d$ are the eigenvalues of $D$.
\newline

The space $L^2(\mathbb{Z}_p/p^d)$ can be understood as
those locally constant functions on $\mathbb{Z}_p$
with radius of constancy not less than $p^{-d}$.
The embedding into $L^2(\mathbb{Z}_p)$ is given
by taking a function $f\in L^2(\mathbb{Z}_p/p^d)$
to the function $\overline f \in L^2(\mathbb{Z}_p)$ 
such that for all $n \in \mathbb{Z}_p / p^d$
\[
n\ni x\mapsto \overline f(x) := f(n)\in\mathbb{C}.
\]


%
Since $\mathbb{Z}_p$ is compact,
every locally constant function in $L^2(\mathbb{Z}_p)$ arises
from such an embedding,
given a sufficiently large $d \in \mathbb{N}$.
In particular this includes all Kozyrev-wavelets.
Notice that a wavelet with $\mathop{\text{supp}} \psi \subseteq n$
for $n \in \mathbb{Z}_p / p^d$ leads to vanishing
scalar products with $f \in L^2(\mathbb{Z}_p / p^d)$ as
\begin{equation}\label{eq:vanishing-scalar-product}
    \langle f, \psi\rangle
    = \int_{\mathbb{Z}_p} f(x) \psi^*(x) \mathop{d_p x}
    = \int_n f(x) \psi^*(x) \mathop{d_p x}
    = f(n) \int_n \psi^*(x) \mathop{d_p x} 
    = 0.
\end{equation}

Thus, the wavelet decomposition of a map
$f \in L^2(\mathbb{Z}_p / p^d)$
 only depends on the indicator $\Omega(|x|_p)$ and
those non-constant wavelets $\psi$ with support
containing any of the equivalence classes in $\mathbb{Z}_p/p^{d-1}$,
i.e.
\begin{equation}
    f(x)=\langle f,\Omega(|\cdot|_p) \rangle \Omega(|x|_p)
    + \sum_{j=0}^{d-1}
      \sum_{r=1}^{p-1}
      \langle f,\psi_{j,0,r} \rangle \psi_{j,0,r}(x).
\end{equation}

Since the eigenvalue for $\psi_{j,0,r}$
only depends on $j$ for integral operators $D$ on $\mathbb{Z}_p$ with
radially symmetric kernel,
we may define $f^{(-1)}(x) := \int_{\mathbb{Z}_p} f(z) \mathop{d_p z}$
and for $j \in \{0,\dots, d-1\}$ set
\begin{equation}
    f^{(j)} :=
    \sum_{r=1}^{p-1}
    \langle f, \psi_{j,0,r} \rangle \psi_{j,0,r}.
\end{equation}

\begin{lem}\label{lem:breadthwise-decomposition}
For $0 \leq \ell \leq d$ consider the function
$A^\ell f := f^{(-1)} + \sum_{j=0}^{\ell-1} f^{(j)}$. It satisfies
$$
    A^\ell f(n) = p^\ell \int_{n^{(\ell)}} f(x) \mathop{d_p x},
$$
where $n^{(\ell)} \in \mathbb{Z}_p / p^\ell$ is the unique equivalence
class for which $n\subseteq m$.
\end{lem}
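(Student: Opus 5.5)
We prove the formula by induction on $\ell$, using that $A^\ell f$ is the orthogonal projection of $f$ onto the functions that are constant on the classes of $\mathbb{Z}_p/p^\ell$. Throughout, $n^{(\ell)}$ denotes the class of $\mathbb{Z}_p/p^\ell$ containing $n$ (the statement's ``$n\subseteq m$'' should read $n\subseteq n^{(\ell)}$). Write $\mathcal{L}_\ell\subset L^2(\mathbb{Z}_p)$ for the functions constant on each class in $\mathbb{Z}_p/p^\ell$. The right-hand side $p^\ell\int_{n^{(\ell)}} f\,d_px$ is the average of $f$ over $n^{(\ell)}$, because $\mu(n^{(\ell)})=p^{-\ell}$. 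It is therefore exactly the conditional expectation, i.e.\ the orthogonal projection $\Pi_\ell f$ of $f$ onto $\mathcal{L}_\ell$. So it suffices to show $A^\ell f=\Pi_\ell f$.

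\emph{Base case $\ell=0$.} Here $A^0f=f^{(-1)}=\int_{\mathbb{Z}_p}f$. Since $\mathbb{Z}_p/p^0$ has the single class $\mathbb{Z}_p$ and $p^0=1$, the formula holds.

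\emph{Inductive step.} Assume $A^\ell f=\Pi_\ell f$; we show $f^{(\ell)}=\Pi_{\ell+1}f-\Pi_\ell f$. The space $\mathcal{L}_{\ell+1}\ominus\mathcal{L}_\ell$ consists of functions constant on classes mod $p^{\ell+1}$ whose average over each class mod $p^\ell$ vanishes. For every class $a+p^\ell\mathbb{Z}_p$, the Kozyrev wavelets of scale $p^{-\ell}$ supported there are the $p-1$ nontrivial characters of $\mathbb{Z}/p$ applied to the next digit, suitably normalised. Together with the indicator of $a+p^\ell\mathbb{Z}_p$, they form an orthogonal basis of the $p$-dimensional space of functions on that disc that are constant on its $p$ subdiscs. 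Hence all wavelets of scale $\ell$, over all $a$ and all $r=1,\dots,p-1$, form an orthonormal basis of $\mathcal{L}_{\ell+1}\ominus\mathcal{L}_\ell$. Consequently $\Pi_{\ell+1}f-\Pi_\ell f$ equals the sum of $\langle f,\psi\rangle\psi$ over these wavelets. This also requires that $\Pi_{\ell+1}f-\Pi_\ell f$ is orthogonal to wavelets of other scales, which is (\ref{eq:vanishing-scalar-product}) for finer scales and the averaging identity for coarser ones.

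This sum must then be identified with $f^{(\ell)}=\sum_r\langle f,\psi_{\ell,0,r}\rangle\psi_{\ell,0,r}$. The main obstacle is the paper's indexing: as written, $f^{(j)}$ involves only translation index $0$, which would cover the single disc $p^{j}\mathbb{Z}_p$. We therefore read $\psi_{j,0,r}$ as standing for all wavelets of level $j$, i.e.\ the sum over translations is implicit, consistent with the decomposition of $f$ displayed before the lemma, which is otherwise incomplete for $d\ge2$. We state this reading explicitly and use the same eigenvalue for all translates. Once $f^{(\ell)}=\Pi_{\ell+1}f-\Pi_\ell f$, we get $A^{\ell+1}f=A^\ell f+f^{(\ell)}=\Pi_{\ell+1}f$, which completes the induction. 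All sums are finite, since $f$ is constant on classes mod $p^d$ and $\ell\le d$, so no convergence issues arise.
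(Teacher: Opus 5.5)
Your proof is correct, and both of your readings are the right ones. The typo should read $n\subseteq n^{(\ell)}$. Also, $f^{(j)}$ must sum over all translates $a$: the paper's own proof works with $\psi_{j,a,r}$ for arbitrary $a$, and with $a=0$ only the identity already fails for $d=\ell=2$.

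Your route differs from the paper's, however. The paper takes $A^df=f$ for granted, writes $f=A^\ell f+\sum_{j=\ell}^{d-1}f^{(j)}$, and integrates over $n^{(\ell)}$. Wavelets of level $j\ge\ell$ have support inside or disjoint from $n^{(\ell)}$, so they integrate to zero there. Wavelets of level $j<\ell$ are constant on $n^{(\ell)}$. Together these give $\int_{n^{(\ell)}}f=\int_{n^{(\ell)}}A^\ell f=p^{-\ell}A^\ell f(n)$ in one step.

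You instead identify each $f^{(j)}$ as the orthogonal projection onto the detail space $\mathcal{L}_{j+1}\ominus\mathcal{L}_j$ and then telescope. This forces you to prove that the level-$j$ wavelets span that space, which you do with the character-of-$\mathbb{Z}/p$ argument and the dimension count.

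The paper's argument is shorter and uses only the two support/constancy facts. But it rests on the completeness $A^df=f$, which it calls true ``by definition''. That completeness is really the displayed wavelet expansion of $f$, and that expansion, as written, omits the translates. Your argument establishes the completeness along the way: the case $\ell=d$ gives $A^df=\Pi_df=f$. It also holds for every $f\in L^2(\mathbb{Z}_p)$, not just $f\in L^2(\mathbb{Z}_p/p^d)$, and it makes explicit that $A^\ell$ is the conditional expectation onto $\mathcal{L}_\ell$.

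One small point: your remark that $\Pi_{\ell+1}f-\Pi_\ell f$ must be orthogonal to wavelets of other scales is harmless but unnecessary. Once the level-$\ell$ wavelets form an orthonormal basis of $\mathcal{L}_{\ell+1}\ominus\mathcal{L}_\ell$, the expansion of the orthogonal projection onto that space follows immediately.
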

\begin{proof}
Consider the integral $\int_{n^{(\ell)}} \psi_{j,a,r}(x) \mathop{d_p x}$
for $n \in \mathbb{Z}_p / p^d$.
If $\ell \leq j \leq d-1$,
then the $\mathop{\text{supp}} \psi_{j,a,r} = a + p^j \mathbb{Z}_p
    \subseteq n^{(\ell)}$,
so by the same argument as for
\cref{eq:vanishing-scalar-product}
the integral vanishes.
Otherwise $n^{(\ell)}
    \subset \mathop{\text{supp}} \psi_{j,a,r}$
and hence $\psi_{j,a,r} (x) = \psi_{j,a,r}(n)$
is constant on $n^{(\ell)}$ turning
the integral into $\int_{n^{(\ell)}} \psi_{j,a,r}(x) \mathop{d_p x}
    = p^{-\ell} \psi_{j,a,r} (n)$.
By definition $A^d f = f$, thus
\begin{align*}
    &\int_{n^{(\ell)}} f(x) \mathop{d_p x}
    = \int_{n^{(\ell)}} A^d f (x) \mathop{d_p x}
    = \int_{n^{(\ell)}} \left( A^\ell f(x)
        + \sum_{j=\ell}^{d-1} f^{(j)}(x)\right)
    \mathop{d_p x}\\
    =&\int_{n^{(\ell)}} A^\ell f(x) \mathop{d_p x}
        + \int_{n^{(\ell)}} \sum_{j=\ell}^{d-1} f^{(j)}(x) \mathop{d_p x}
    = \int_{n^{(\ell)}} A^\ell f(x) \mathop{d_p x}
    = p^{-\ell} A^\ell f(n).
\end{align*}
\end{proof}

This linear operator $A^\ell$ takes local averages
on $\mathbb{Z}_p / p^\ell$ to transform a map from
$L^2(\mathbb{Z}_p / p^d)$ to another from $L^2 (\mathbb{Z}_p / p^\ell)$.
Using the convention $A^{-1} := 0$,
one may compute the breadthwise decomposition $\{f_\ell\}_{\ell=0}^{d}$
for $f$ as such
$f_\ell = A^{\ell} f - A^{\ell-1} f$.

\subsection{Detailed Description of the Algorithm}
The previous Lemma \ref{lem:breadthwise-decomposition}
informs the following algorithm
to obtain the breadthwise-decomposition.
Via the Monna map,
one may impose a linear order on $\mathbb{Z}_p / p^d$
indicated by the essential infima,
giving a formal interpretation to the tuple
$f_0=(f_0^1, \dots, f_0^N)$ with $N := p^d$
as representing a function within $L^2(\mathbb{Z}_p /p^d)$.
To form $A^{d-1} f$,
partition $f_0$ into $p$-sized disjoint blocks
and compute the average of each block.
Iterate this to compute $(A^df_0,..., A^0f_0)$.

The same procedure can be generalized to $Q$-adic trees,
where the branching number is not constant for every node.
Instead a list of primes $Q=(q_\ell)_{\ell=1}^d$ is given
and the branching number is constant for every level $\ell$.
In such a case $N = \prod_{\ell=1}^d q_\ell$ must be assumed.
This is imlemented in python below.
By taking $(1-p^{-1})$ into the sum from Theorem \ref{teo 4.1}
one can already hint at the eigenvalue for $Q$-adic domains.

\begin{minipage}{18cm}\tt
from math import prod
\\
def decompose(Q: list, f: list) -> list[list]:
\\
\hspace*{1cm}    N=len(f); fs=[f,]
\\
\hspace*{1cm}    assert prod(Q) == N, f"{Q=} must factorize {N=} without residue!"
\\
\hspace*{1cm}    for q in Q[::-1]:
\\
\hspace*{2cm}        cur = fs[-1]; n=len(cur)
\\
\hspace*{2cm}        fs.append([ prod(cur[i:i+q]) for i in range(0,n,q) ])
\\
\hspace*{1cm}    Af=fs[::-1]; out=[Af[0]]  {\# fixes indexing: Af[i][j] = A\^{}i f(j)}
\\
\hspace*{1cm}    for i,q in enumerate(Q[1:]):
\\
\hspace*{2cm}        n = len(Af[i])
\\
\hspace*{2cm}        out.append([ Af[i][j] - Af[i-1][j//q] for j in range(n) ])
\\
\hspace*{1cm}    return out
\end{minipage}

The space needed for this is given by
$\sum_{k=0}^{d} p^k = \frac{p^{d+1}-1}{p-1}$
and thus within $\mathcal{O}(N)$.
For time complexity consider that
each iteration can be performed in exactly $N$ lookups,
$\frac{N}{p}$ writes,
$\frac{N(p-1)}{p}$ additions and $\frac{N}{p}$ divisions,
giving a complexity of $\mathcal{O}(N)$,
or to be more precise:
\[
    \underbrace{\sum_{k=1}^d \frac{p^k (p+1)}{p}}_{\text{I/O-ops}}
        + \underbrace{\sum_{k=1}^d p^k}_{\text{flops}}
    = \frac{(2p+1)(p^d-1)}{p-1}
\]

After the breadthwise decomposition is obtained
and the eigenvalues are computed as shown in Theorem \ref{teo 4.1},
the evolution in time may be computed by the heat kernels using
\begin{equation}\label{eq:time-evolution}
    f(t,x) = \sum_{r=-1}^d e^{- \lambda_r t} f_0^{(r)}(x).
\end{equation}

\subsection{Two Example Operators}
The algorithm explained above is demonstrated by
evolving a common initial value
$f_0$
via two families of kernels.
It is applied to the real function
$f_0(x) = e^{-16 \cdot (x-\frac{1}{2})^2}$
by discretizing it to a regular partition
of the unit interval.
For the examples $p=3$ is chosen and the $3$-adic domain
is approximated by a tree of height $h=6$.
The computational overhead lies in computing
this decomposition and the eigenvalues to the operator.
After these have been computed,
a computer search approximates the equilibrium time
given by $T := \inf \{t > 0 : |f_t|_\infty < 10^{-3}\}$.

The simulations presented below
take $N = p^h = 729$ time steps to generate an image,
but since linear subdivisions of $[0,T]$ rarely produce
interpretable images,
a parameter $\tau$ is introduced
which determines the length of a timestep
by normalizing the geometric progression
$(1+\tau)^n$ for $n=0,\dots, N-1$
to lie within the desired interval.
The "correct" value for $\tau$ lies in the eye of the beholder,
thus it is determined via trial and error.

\begin{figure}[h]
    \centering
    \begin{subfigure}{0.32\textwidth}
        \includegraphics[width=\textwidth]{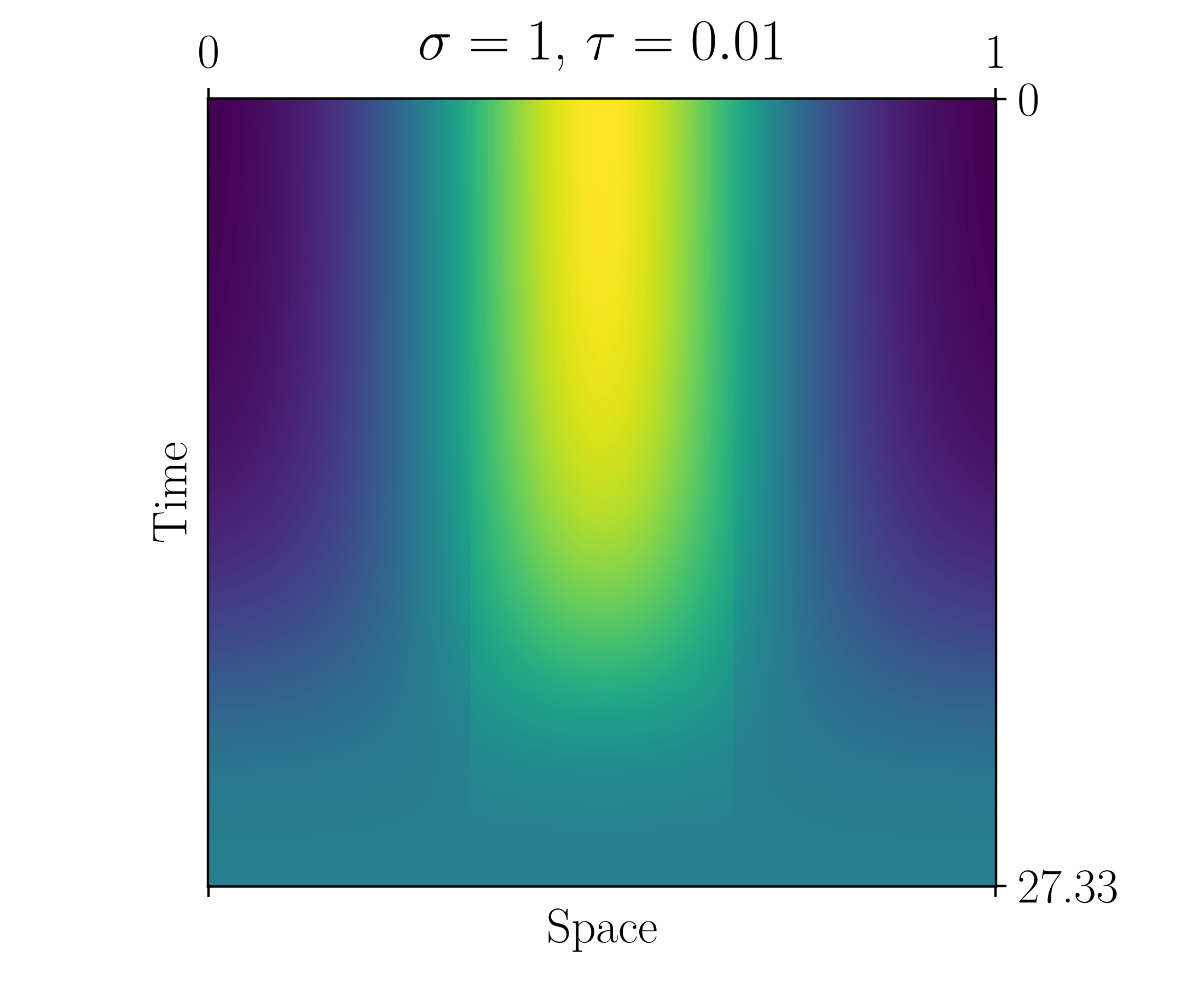}
    \end{subfigure}
    \hfill
    \begin{subfigure}{0.32\textwidth}
        \includegraphics[width=\textwidth]{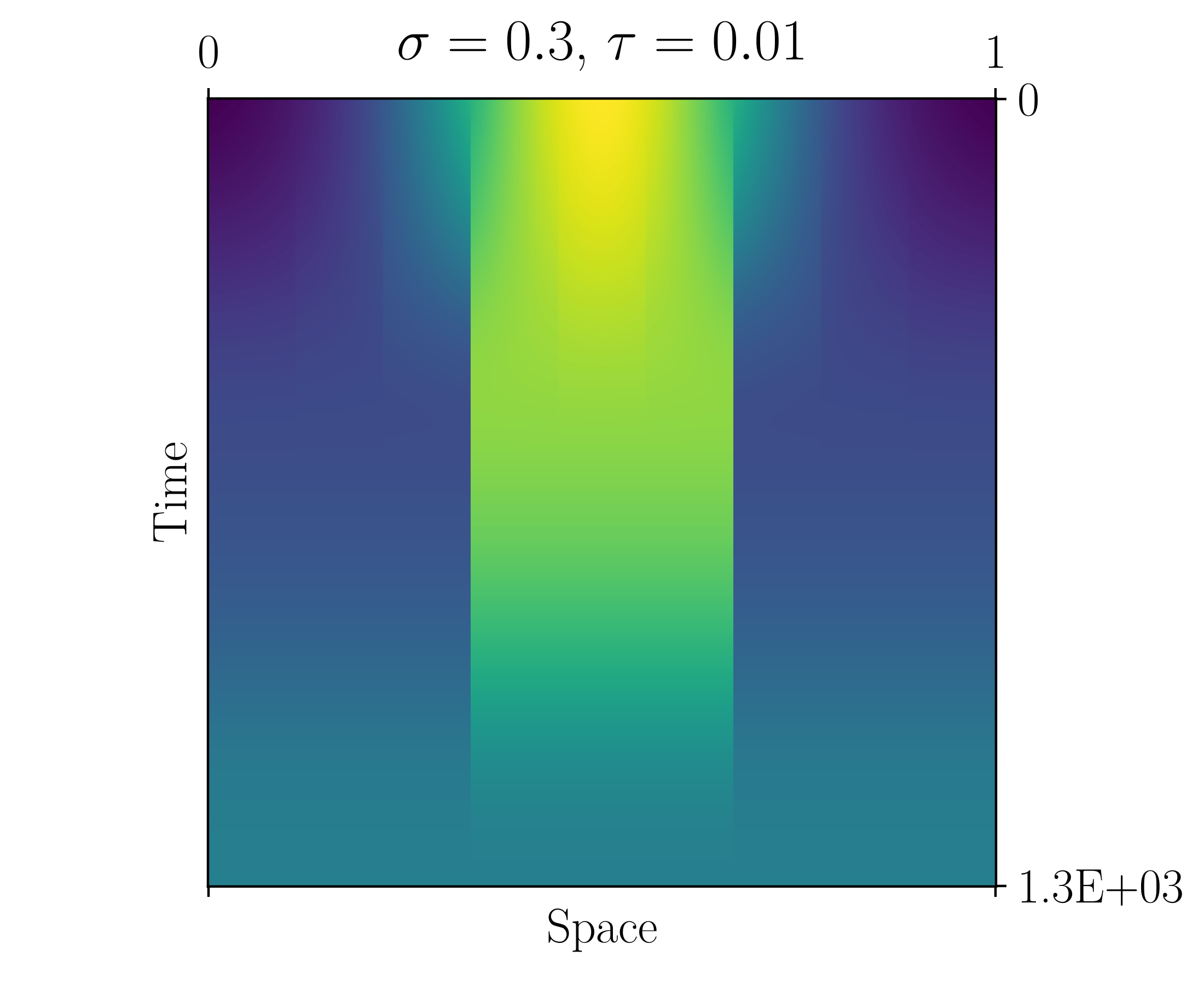}
    \end{subfigure}
    \hfill
    \begin{subfigure}{0.32\textwidth}
        \includegraphics[width=\textwidth]{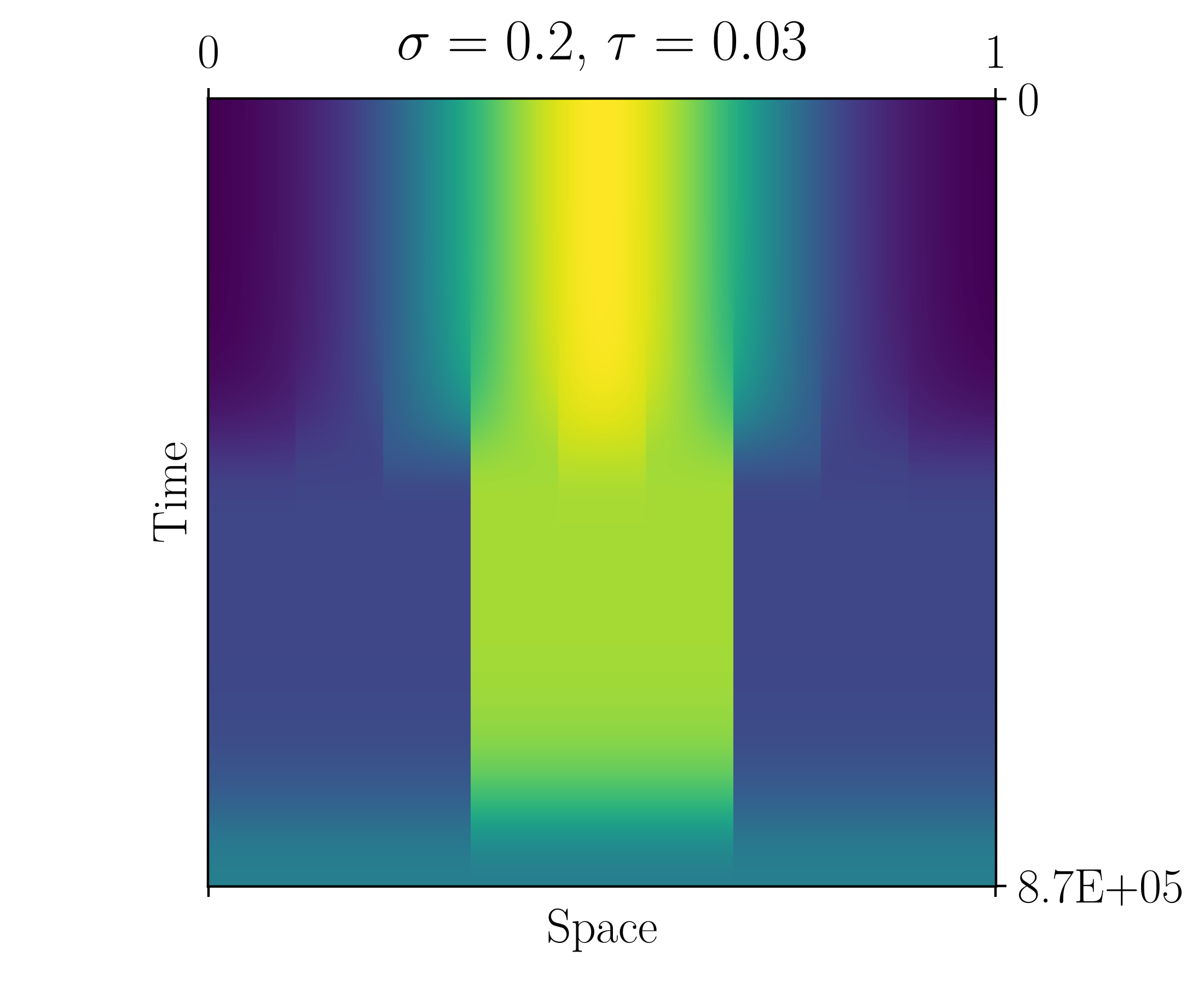}
    \end{subfigure}
    \caption{Evolution of $f_0$ by the kernel
        $k_\sigma(|x-y|_p)= \frac{1}{\sqrt{2 \pi \sigma^2}}
            e^\frac{-|x-y|_p^2}{2 \sigma^2}$
    }
    \label{fig:gaussian-kernel}
\end{figure}

\begin{figure}[h]
    \centering
    \begin{subfigure}{0.32\textwidth}
        \includegraphics[width=\textwidth]{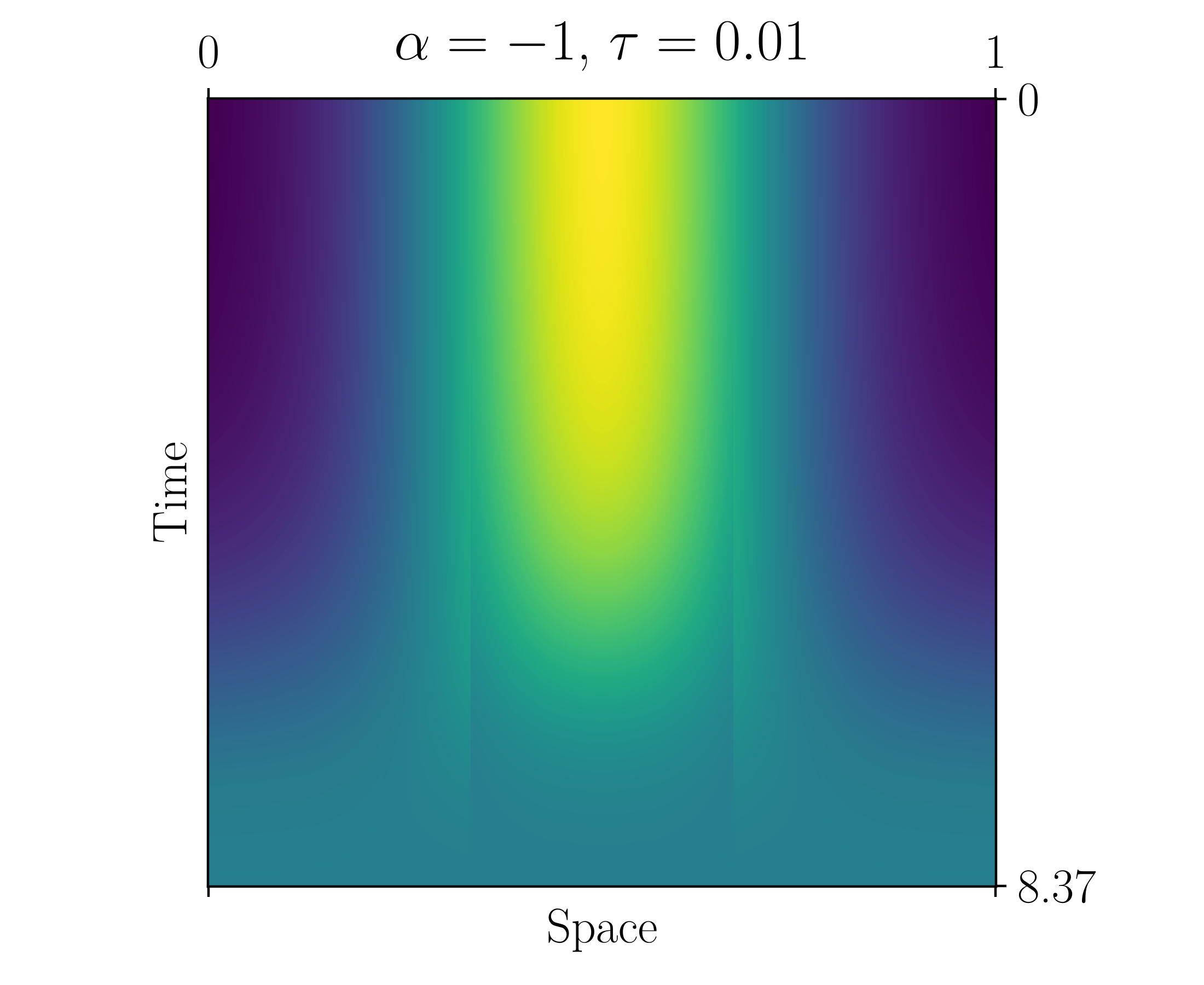}
    \end{subfigure}
    \hfill
    \begin{subfigure}{0.32\textwidth}
        \includegraphics[width=\textwidth]{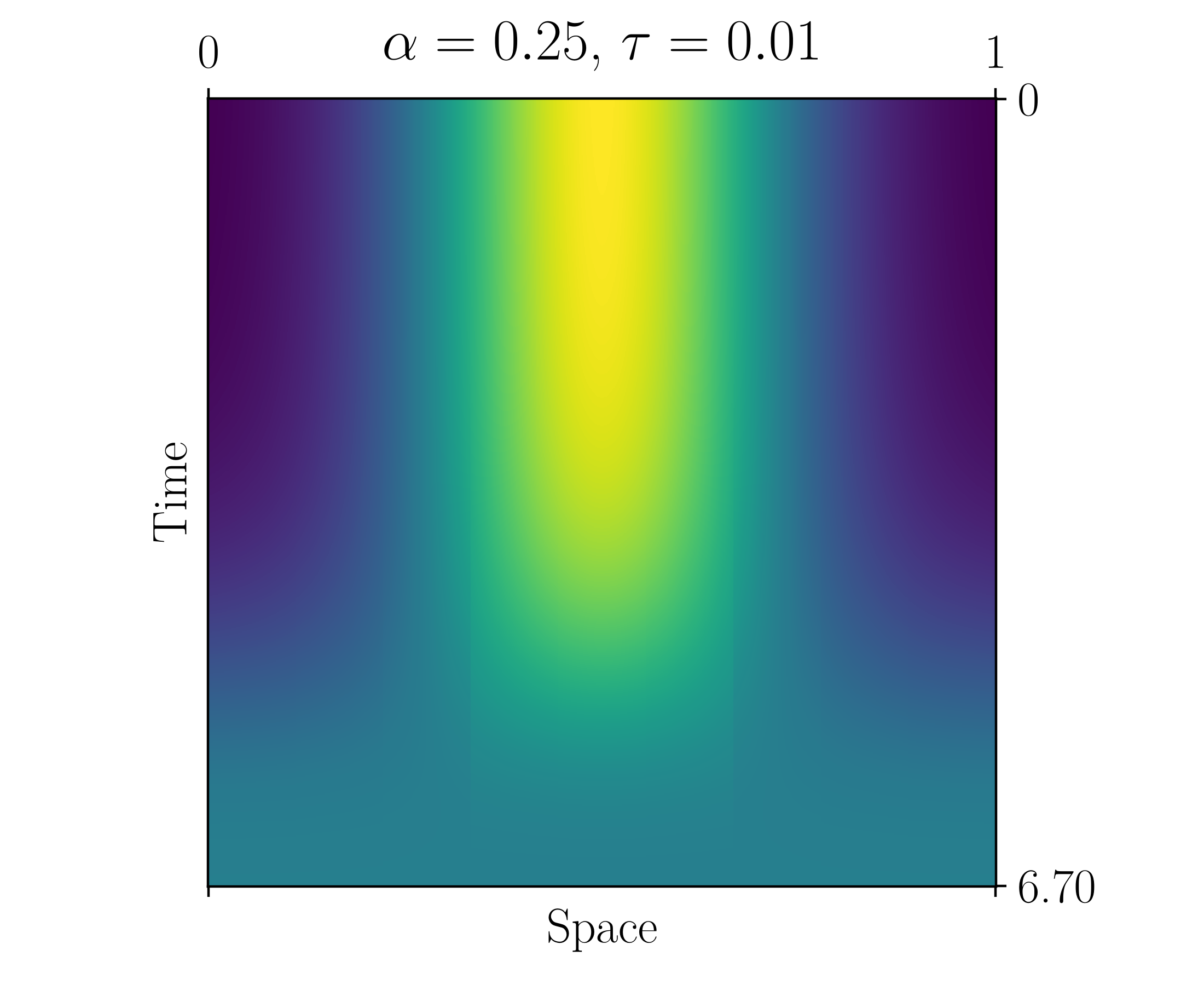}
    \end{subfigure}
    \hfill
    \begin{subfigure}{0.32\textwidth}
        \includegraphics[width=\textwidth]{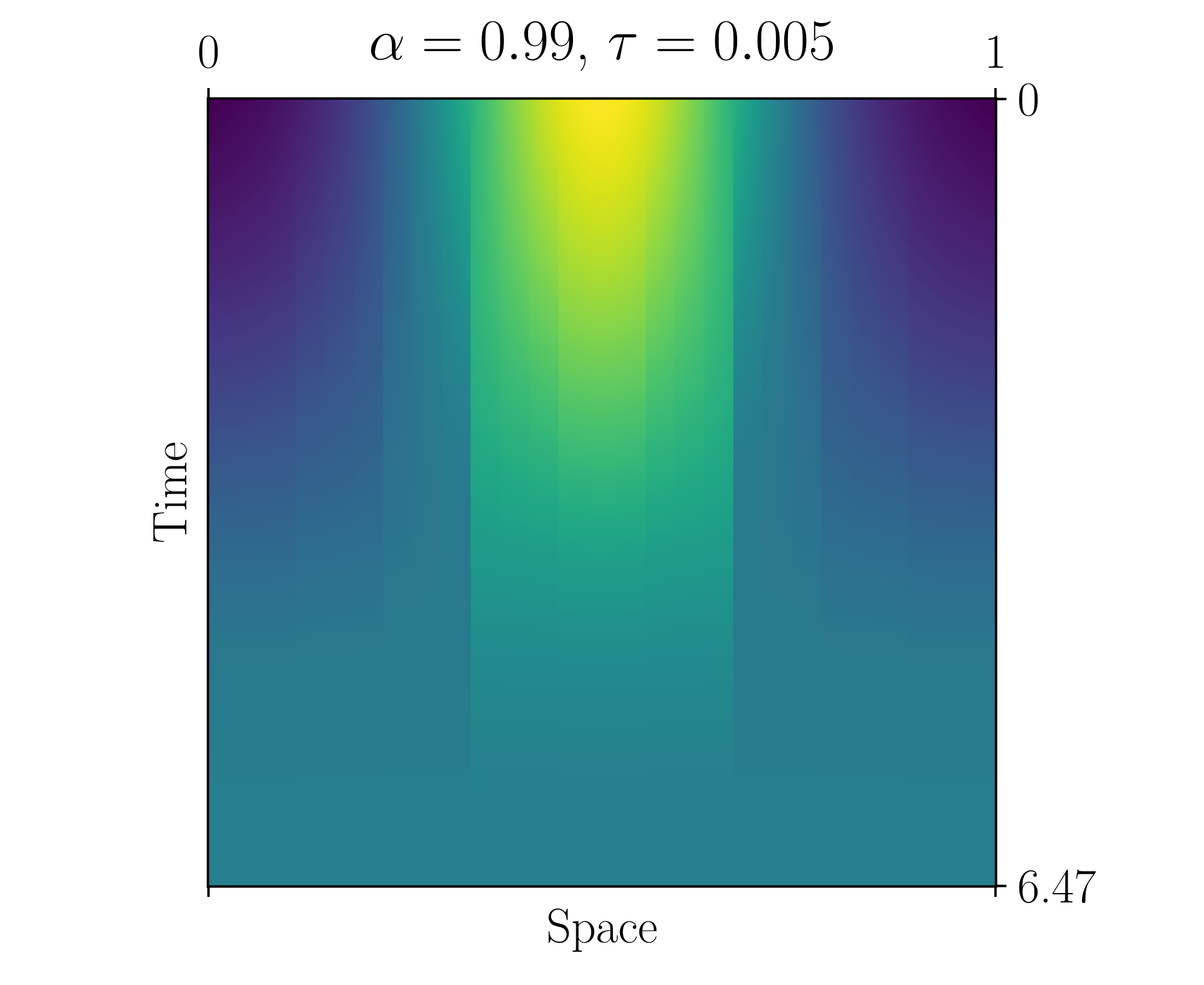}
    \end{subfigure}
    \caption{Evolution of $f_0$ by the kernel
        $k_\alpha(|x-y|_p)= |x-y|_p^{-\alpha}$
    }
    \label{fig:power-kernel}
\end{figure}

Figure \ref{fig:gaussian-kernel} depicts the time evolution
under operators defined by radial functions taking values
of a normal distribution with standard deviation $\sigma > 0$
at points of the form $p^{-k}$,
yielding a bounded operator.
In Figure $\ref{fig:power-kernel}$ the operators are given by
an inverse power law which gives a bounded operator for $\alpha < 1$. 
For $\alpha=0$ (and large values of $\sigma$)
the kernel is (almost) constant and thus models
an exponential decay of the mean-zero component.

All operators, except the one having $k_{-1}$ for its defining radial function,
exhibit eigenvalues $\lambda_r$ which grow
monotonically in $r$.
Such operators produce evolutions in time,
where perturbations on small scales of the initial value
vanish faster than those on larger scales.
As can be seen in the graphic above,
where as time $t$ progresses 
$f_t$ becomes increasingly well approximated
by fewer terms of Equation (\ref{eq:time-evolution}),
turning $f_0$ (almost) locally constant
on succeedingly larger intervals.
This discretization effect becomes more drastic,
as $\sigma$ is decreased
(favoring local over global interactions)
and thus dilating the equilibrium time for $f_0$,
as it decomposes with large contributions from
the first couple of eigen spaces.

\section*{Acknowledgements}
The authors want to thank 
Martin Breunig, 
Bastian Erdn\"u{\ss}, Markus Jahn, 
David Weisbart, 
and
Wilson Z\'{u}\~{n}iga-Galindo
for fruitful discussions.
This research is partially funded by the Deutsche For\-schungsgemeinschaft under 
project number 469999674.

\bibliographystyle{plain}
\bibliography{biblio}

\end{document}